\newtheorem{theorem}{Theorem}[section]
\newtheorem{lemma}[theorem]{Lemma}
\newtheorem{proposition}[theorem]{Proposition}
 \theoremstyle{definition}
\newtheorem{problem}[theorem]{Problem}
\theoremstyle{remark}
\newtheorem{remark}[theorem]{Remark}
\numberwithin{equation}{section}
\begin{document}
\setlength{\baselineskip}{1.2\baselineskip}

\title[Pogorelov type estimates]
{Pogorelov type estimates for $(n-1)$-Hessian equations and 
related rigidity theorems}

\author{Qiang Tu}
\address{Faculty of Mathematics and Statistics, Hubei Key Laboratory of Applied
Mathematics, Hubei University,  Wuhan 430062, P.R. China}
\thanks{Research of the author was supported by funds from the National
Natural Science Foundation of China No. 12101206; the Natural Science Foundation of Hubei Province, China, No. 2023AFB730.}

\email{qiangtu@hubu.edu.cn}

\begin{abstract}
In this paper, we establish  Pogorelov type $C^2$ estimates for admissible solutions to the Dirichlet problem of $(n-1)$-Hessian equation  based on a concavity inequality, which is inspired by the Lu-Tsai's work on the global curvature estimates for the $n-1$ curvature equation. As an application, we apply such estimates to obtain a rigidity theorems for admissible solutions of $(n-1)$-Hessian equation only under quadratic growth conditions. This result gives a positive answer to a open problem for $k$-Hessian equation, which is  proposed by Chang-Yuan, in case $k=n-1$.

{\em Mathematical Subject Classification (2010):}
 Primary 35J60, Secondary 35B45.

{\em Keywords:} Pogorelov type estimates, Hessian equations, Rigidity theorem.

\end{abstract}

\maketitle
\bigskip


\section{Introduction}

\medskip

Let $\Omega \subset \mathbb{R}^n$
be a bounded domain with smooth boundary and $n\geq 3$. In this paper we consider a Pogorelov type $C^2$ estimates for the following Dirichlet problem
of Hessian equations for $k=n-1$
\begin{equation}\label{Eq}
\left\{
\begin{aligned}
&\sigma_{k}(\nabla^2u)=\psi(x,u,\nabla u) &&in~
\Omega,\\
&u = 0 &&on~\partial \Omega,
\end{aligned}
\right.
\end{equation}
where $\sigma_k (\nabla^2u)$ denote by $\sigma_k
(\lambda(\nabla^2u))$ (see \eqref{g}) with $\lambda(\nabla^2u)$
being the eigenvalues of the Hessian matrix $\nabla^2u$, $\psi$ is a positive and smooth function in $\Omega\times \mathbb{R}\times \mathbb{R}^n$. Moreover,
$u\in C^2(\Omega)$ is admissible if $\lambda(\nabla^2u)$ belongs to
the Garding's cone $\Gamma_k$ (see \eqref{g-k}).

As we all know, the classic $k$-Hessian equation 
\begin{equation}\label{Eq-0p}
\sigma_k(\nabla^2u)=\psi(x,u,\nabla u)
\end{equation}
is an important research content in the field of fully nonlinear partial differential equations and geometric analysis, which is associated with many important geometric problems, such as Minkowski problem, prescribing curvature problem and so on, see \cite{Chu21, Chu20, Gi98, Ger03, Guan-12, Guan-09, GRW15, lu23, RW-19, RW-20, RW-200, Sheng07, Sz17, Sz18, To17, To19, Wu87}. The a prior estimates, especially the $C^2$ estimates,  for $k$-Hessian equation \eqref{Eq-0p} is a longstanding problem, which has attracted much attentions. For the relative work, we refer the readers to \cite{Chen20, CNS84, CNS85, Chou01,  Ha09, Li04, Li96, S05, Tr95, TW-08, Wang09}.

In the sequel we study Pogorelov type $C^2$ estimates, which is a type of interior $C^2$ estimates with boundary information, for the Dirichlet problem \eqref{Eq}. The initial motivation of our work is the following: Pogorelov estimates for Monge-Amp\`ere equations were studied at first by Pogorelov \cite{Gi98, Po78}.
Then, Chou-Wang extended
Pogorelov type estimates to the case of $k$-Hessian equations
\cite{Chou01, Wang09}. More precisely,  when the right hand side function $\psi=\psi(x,u)$,  Chou-Wang proved
that there exist a constant $\epsilon>0$ such that 
\begin{equation}\label{Pog}
\sup_{\Omega}(-u)^{1+\epsilon}|\nabla^2 u|\leq C
\end{equation}
for any $k$-convex solution $u$ to the Hessian equations
\eqref{Eq}. 
It's worth pointing out that the small constant $\epsilon$ should not be zero in Chou-Wang's proof.
Later, Li-Ren-Wang \cite{Li16} developed new techniques to drop the small $\epsilon$ and established Pogorelov
type estimates for $(k+1)$-convex solutions to the Hessian equations \eqref{Eq} with $\psi$ depending on $x, u$ and the gradient
term $\nabla u$. Recently, Zhang \cite{Zhang24} derived a concavity inequality for $k$-Hessian operators under the semi-convexity condition. As an application, he established Pogorelov type $C^2$ estimates for semi-convex and admissible solution to the  Hessian equations \eqref{Eq}.

From analysis point of view, a natural problem is whether  we can weak the semi-convexity assumption in \cite{Zhang24} and established Pogorelov type $C^2$ estimates to the  Hessian equations \eqref{Eq}.
 Our idea goes back as far as Ren-Wang's work \cite{RW-19, RW-20, RW-200}  on the global curvature estimates for the $n-1$ curvature equation. More precisely, they established the global curvature estimate for the following prescribing curvature equation 
\begin{equation}\label{008989}
\sigma_{n-1}(\kappa(X))=f(X, \nu(X)), \quad \forall~X\in M,
\end{equation}
$M\subset \mathbb{R}^{n+1}$ is a closed hypersurface, $\kappa(X)$ and $\nu(X)$ are principal curvatures and unit outer normal vector at $X\in M$, respectively. Note that, their results based on  conjecture inequality, and the conjecture holds when $k\geq n-2$. Recently,  Lu-Tsai\cite{lu24} gave a simple proof of the curvature estimates for the  equation \eqref{008989}. The main idea of their proof is to separate the arguments into semi-convex and non-semi-convex cases. 

Inspired by these results, we establish the following Pogorelov type $C^2$ estimates for admissible solutions of Hessian equation \eqref{Eq} for $k=n-1$.

\begin{theorem}\label{main-2}
Let $n>2$ and $\psi \in C^{1,1}(\overline{\Omega}\times \mathbb{R}\times \mathbb{R}^n)$ with $\psi>0$. Assume $u\in C^4(\Omega)\cap C^2(\overline{\Omega})$ is a solution to the following Dirichlet problem of $(n-1)$-Hessian equations 
\begin{equation}\label{Eq-n-1}
\left\{
\begin{aligned}
&\sigma_{n-1}(\nabla^2u)=\psi(x,u,\nabla u) &&in~
\Omega,\\
&u = 0 &&on~\partial \Omega,
\end{aligned}
\right.
\end{equation}
 with $\lambda(\nabla^2u) \in
\Gamma_{n-1}$. Then there exist a constant $\beta>0$ such that
\begin{equation*}
(-u)^{\beta}|\nabla^2 u|(x) \leq C, \quad \forall~ x \in \Omega,
\end{equation*}
where $C, \beta$ depends on $n, \sup_{\Omega} |u|, \sup_{\Omega} |\nabla u|, |\psi|_{C^2}$, $\inf_{\Omega}\psi$..
\end{theorem}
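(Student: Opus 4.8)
The plan is to follow the Lu--Tsai strategy of splitting into semi-convex and non-semi-convex regimes, combined with the concavity inequality for the $(n-1)$-Hessian operator. Set $W = -u > 0$ in the interior and consider the standard auxiliary test function
\begin{equation*}
\Phi(x,\xi) = \beta \log W(x) + \log u_{\xi\xi}(x) + \frac{a}{2}\abs{\nabla u}^2 + b\, W(x),
\end{equation*}
where $\xi$ ranges over unit vectors, $a,b>0$ are constants to be fixed, and $\beta>0$ is the (small) exponent we are trying to produce. Suppose $\Phi$ attains its maximum over $\Omega \times \mathbb{S}^{n-1}$ at an interior point $x_0$ in a direction $\xi = e_1$; after rotating we may assume $\nabla^2 u(x_0)$ is diagonal with $\lambda_1 = u_{11} \geq \lambda_2 \geq \cdots \geq \lambda_n$. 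Since on the boundary $W = 0$ and $\log W \to -\infty$, the maximum is genuinely interior provided $u_{11}$ is large there, and the boundary $C^2$-bound (which we may assume, being part of the data) handles the case where $\max \Phi$ is not large. Differentiating $\Phi$ twice at $x_0$, using $\nabla \Phi(x_0) = 0$ to eliminate first derivatives of $u_{11}$, and contracting the resulting inequality $\sigma_{n-1}^{ii}\Phi_{ii} \leq 0$ with the linearized operator $\sigma_{n-1}^{ii} = \partial \sigma_{n-1}/\partial \lambda_i$, one arrives after the usual manipulations at an inequality of the schematic form
\begin{equation*}
\beta \sum_i \sigma_{n-1}^{ii}\frac{u_{ii}^2}{W^2} + \sum_{i} \sigma_{n-1}^{ii}\frac{(u_{11i})^2}{\lambda_1^2} \;\leq\; \sum_i \sigma_{n-1}^{ii}\frac{u_{11ii}}{\lambda_1} + (\text{good terms from } a,b) + C.
\end{equation*}

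The third-derivative term $\sum_i \sigma_{n-1}^{ii} u_{11ii}/\lambda_1$ is controlled by differentiating the equation $\sigma_{n-1}(\nabla^2 u) = \psi$ twice in the $e_1$-direction: this produces $\sum_i \sigma_{n-1}^{ii} u_{11ii} = (\psi)_{11} - \sum_{p,q,r,s}\sigma_{n-1}^{pq,rs} u_{pq1} u_{rs1}$, where the second term is the (negative-definite in the right sense) second-order terms of the operator. The crucial input is the \emph{concavity inequality} of Zhang/Lu--Tsai type: for the $(n-1)$-Hessian operator one has a bound of the form
\begin{equation*}
-\sum_{p,q,r,s} \frac{\sigma_{n-1}^{pq,rs}}{\lambda_1} u_{pq1}u_{rs1} + \sum_i \sigma_{n-1}^{ii}\frac{(u_{11i})^2}{\lambda_1^2} \;\geq\; \delta \sum_{i} \sigma_{n-1}^{ii}\frac{(u_{11i})^2}{\lambda_1^2} + (\text{positive second-order contribution}),
\end{equation*}
valid \emph{without} any semi-convexity assumption precisely because $k = n-1$; this is the substitute for Ren--Wang's conjectural inequality in the case it is known. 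In the semi-convex region (all $\lambda_i \geq -A\lambda_1$ for suitable $A$, equivalently $\lambda_n$ not too negative) one invokes the concavity inequality directly to absorb the bad gradient terms. In the non-semi-convex region ($\lambda_n < -A\lambda_1$), one instead exploits that $-\lambda_n$ is comparable to $\lambda_1$, so $\sigma_{n-1}^{nn}$ is large (of order $\sigma_{n-2}(\lambda|n) \sim \lambda_1^{n-2}$) and provides a strong positive term $\sigma_{n-1}^{nn}(u_{11n})^2/\lambda_1^2$; combined with the structure of $\sigma_{n-1}$ on $\Gamma_{n-1}$ this again closes the estimate. In both cases the $\beta \sum_i \sigma_{n-1}^{ii} u_{ii}^2/W^2$ term on the left, being positive, is what ultimately forces $\lambda_1 W^{\beta}$ to be bounded once $\beta$ is chosen small relative to $\delta$ and the other structural constants.

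The main obstacle I expect is the case analysis in the non-semi-convex region: there one must track carefully how the terms $\sigma_{n-1}^{ii}$ degenerate or blow up as $\lambda_n \to -\infty$, and show that the single good term coming from the large eigenvalue $\sigma_{n-1}^{nn}$ (together with the gradient-quadratic terms weighted by $a$, which is where the hypothesis $\psi \in C^{1,1}$ and the dependence on $\abs{\nabla u}$ enters) genuinely dominates all the error terms arising from $\psi_{11}$, from the cross terms $\psi_{\nabla u} \cdot \nabla u_{11}$, and from the commutator-type remainders. A secondary technical point is to verify that the constant $\beta$ produced this way depends only on the claimed quantities $n, \sup|u|, \sup|\nabla u|, |\psi|_{C^2}, \inf \psi$ and not on higher norms of $u$; this requires being scrupulous that every application of Cauchy--Schwarz or Young's inequality uses only these data. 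Once the interior maximum-principle argument is closed, the theorem follows by the usual covering/patching with the boundary estimate.
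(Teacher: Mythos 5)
Your proposal follows essentially the same route as the paper: the test function is $\ln\lambda_1 + \beta\ln(-u) + \frac{B}{2}|\nabla u|^2$ (the paper does not need your extra $bW$ term), and the crux is the $(n-1)$-concavity inequality of Lemma \ref{pro-55-33}, whose proof goes via the Zhang/Lu--Tsai split into a ``bounded-below'' regime and a ``$\lambda_n$ very negative'' regime. Two points are worth flagging.

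First, a small but real inaccuracy in the case split: the paper partitions according to $\lambda_n \gtrless -A$ for an \emph{absolute} constant $A$, not $\lambda_n \gtrless -A\lambda_1$. The distinction matters: Zhang's semi-convex concavity inequality (the paper's Lemma \ref{pro-55-1}) requires $\lambda_n \geq -A$ with $A$ fixed, and its constants deteriorate with $A$; if you replace $A$ by $A\lambda_1$ the lemma does not apply as stated. In the complementary regime $\lambda_n \leq -A$ (paper's Lemma \ref{pro-55}) the key quantity is not that $-\lambda_n$ is ``comparable to $\lambda_1$'' but that $\Lambda := -\sigma_n = \lambda_1\cdots\lambda_{n-1}(-\lambda_n) \geq A^{n-1}\lambda_1$, since $\lambda_i + \lambda_n > 0$ for $i<n$; that lower bound on $-\sigma_n$, not on $-\lambda_n$ itself, is what drives the matrix-positivity computation. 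Second, your ``usual manipulations'' elide a nontrivial algebraic step: after the max-point inequality, the off-diagonal third-order terms $u_{11p}$ ($p > m$, where $m$ is the multiplicity of $\lambda_1$) must be killed before the concavity inequality (which is restricted to $\xi_i=0$ for $1<i\leq m$) can be applied. This is the paper's Lemma \ref{claim-21}, which uses the structural bound $-\lambda_p \leq \frac{1}{n-1}\lambda_1$ on $\Gamma_{n-1}$ and the constraint $\beta > \frac{2n}{n-2}$ to show
\[
2\sum_{p>m}\frac{F^{11}u_{11p}^2}{\lambda_1(\lambda_1-\lambda_p)} + 2\sum_{p>m}\frac{(F^{pp}-F^{11})u_{11p}^2}{(\lambda_1-\lambda_p)\lambda_1} - \Big(1+\tfrac{2}{\beta}\Big)\sum_{p>1}\frac{F^{pp}u_{11p}^2}{\lambda_1^2} \geq 0.
\]
Without this reduction the concavity inequality as stated (with $\xi$ supported on $\{1\}\cup\{m+1,\ldots,n\}$) cannot be invoked. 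Otherwise your strategy, including the identification of the concavity inequality as the crucial ingredient and the recognition that it substitutes for Ren--Wang's conjecture in the $k=n-1$ case, matches the paper's proof.
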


As an application of the above results, a rigidity theorem can be considered  for $(n-1)$-Hessian equations. In \cite{chang-10}, Chang-Yuan  proposed a problem that:
\begin{problem}\label{pro-semi-1} 
Are the entire solutions of the following $k$-Hessian equations 
\begin{equation}\label{digity-00}
\sigma_k( \nabla^2 u(x))=1\quad \forall~x\in\mathbb{R}^n,
\end{equation}
 with lower bound only quadratic polynomials?
\end{problem}

Recall that the classical Liouville theorem for Laplace equations and J$\ddot{o}$rgens-Calabi-Pogorelov theorem for Monge-Amp\'ere equations gave a positive answer to the  Problem \ref{pro-semi-1}  in case  $k=1$ and $k=n$, respectively.  For $k=2$, Chang-Yuan \cite{chang-10}  have proved that, if $u$ satisfies an almost convexity conditon, i.e., 
$$\nabla^2 u \geq \left(\delta-\sqrt{\frac{2n}{n-1}} \right)\quad \forall~\delta>0,$$
then the entire solution of the equation  \eqref{digity-00} only are quadratic polynomials. Chen-Xiang \cite{Chen19} showed that all “super quadratic” entire
solutions to equation  \eqref{digity-00} with $\sigma_1( \nabla^2 u) >0$ and $\sigma_3( \nabla^2 u)\geq -K$ are also quadratic polynomials.
 Then Shankar-Yuan \cite{RY-22} have proved that every entire semi-convex solution of equation \eqref{digity-00} in three dimensions is a quadratic polynomial. For general $k$, Bao-Chen-Guan-Ji \cite{Bao-03} proved that strictly entire convex solutions of equation \eqref{digity-00} satisfying a quadratic growth are quadratic polynomials. Li-Ren-Wang \cite{Li16} relaxed the condition of strictly convex solutions in \cite{Bao-03} to $(k+1)$-convex solutions. It was guessed in\cite{chang-10}  and \cite{Warren-16} that the semi-convex assumption is the necessary condition for the Problem \ref{pro-semi-1}. This conjecture was recently confirmed by Zhang in \cite{Zhang24}, in which he solved the Problem \ref{pro-semi-1} under  semi-convex assumption. However, it is to be expected that the Problem \ref{pro-semi-1} is still true without  semi-convex assumption in some  special cases. 
In this paper, we confirm Chang-Yuan's problem in case $k=n-1$.
 The following is our result.

\begin{theorem}\label{main-4}
The entire and admissible solutions  of the equation 
$$\sigma_{n-1} (D^2 u)=1$$
 defined in $\mathbb{R}^n$ satisfy the quadratic growth condition: there exist positive constants $c, b$ and sufficiently large $R$ such that
\begin{eqnarray}\label{rigidity-03}
u(x) \geq c|x|^2-b\quad \mbox{for}~|x|\geq R.
\end{eqnarray}
Then $u$ are quadratic polynomials.
\end{theorem}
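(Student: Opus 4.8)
The plan is to deduce Theorem~\ref{main-4} from the Pogorelov estimate of Theorem~\ref{main-2} by a standard rescaling/blow-down argument, in the spirit of the Jörgens--Calabi--Pogorelov proof for Monge--Amp\`ere and its extensions by Bao--Chen--Guan--Ji and Li--Ren--Wang. First I would fix $R$ and consider, for each large $\rho > R$, the rescaled domain $\Omega_\rho = \{ x : u(x) < \rho^2 \}$, which by the quadratic growth lower bound \eqref{rigidity-03} is bounded (contained in a ball of radius comparable to $\rho$) and, since $\lambda(\nabla^2 u)\in\Gamma_{n-1}$ forces $\Delta u > 0$ so that $u$ is subharmonic and attains no interior maximum, is a genuine sublevel set with $\partial\Omega_\rho = \{u = \rho^2\}$. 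On $\Omega_\rho$ the function $w = u - \rho^2$ solves $\sigma_{n-1}(\nabla^2 w) = 1$ with $w = 0$ on $\partial\Omega_\rho$ and $\lambda(\nabla^2 w)\in\Gamma_{n-1}$, so Theorem~\ref{main-2} applies and yields
\begin{equation*}
(\rho^2 - u(x))^{\beta} |\nabla^2 u(x)| \leq C(n, \sup_{\Omega_\rho}|w|, \sup_{\Omega_\rho}|\nabla w|, 1)
\end{equation*}
for all $x\in\Omega_\rho$, with $\beta$ depending only on $n$.

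The key point is to control the right-hand side uniformly as $\rho\to\infty$ after the correct parabolic scaling. Introduce $u_\rho(y) = \rho^{-2} u(\rho y)$ on $\tilde\Omega_\rho = \rho^{-1}\Omega_\rho$; then $u_\rho$ again solves $\sigma_{n-1}(\nabla^2 u_\rho) = 1$, the domain $\tilde\Omega_\rho$ lies in a fixed ball $B_{r_0}$ by \eqref{rigidity-03}, and $\sup_{\tilde\Omega_\rho} |u_\rho - 1|$ is bounded by a fixed constant. The one nonroutine input is the gradient bound $\sup_{\tilde\Omega_\rho}|\nabla u_\rho| \leq C$: this follows because $u_\rho$ is admissible hence subharmonic with bounded oscillation on $\tilde\Omega_\rho$ and one can invoke the interior gradient estimate for $(n-1)$-Hessian equations (e.g.\ via the well-known gradient estimates for $k$-Hessian equations, or simply from convexity-type barriers on the sublevel sets), giving a bound on $|\nabla u_\rho|$ on, say, $\{u_\rho < 1 + \frac12\}$ depending only on $n$ and the fixed constants. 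Feeding this into the scaled Pogorelov inequality gives, after undoing the scaling,
\begin{equation*}
(\rho^2 - u(x))^{\beta} |\nabla^2 u(x)| \leq C \rho^{2\beta}, \qquad x\in\{u < \rho^2\},
\end{equation*}
with $C$ independent of $\rho$; letting $\rho\to\infty$ for fixed $x$ yields the global bound $|\nabla^2 u(x)| \leq C$ on all of $\mathbb{R}^n$. (Equivalently, one evaluates at a fixed scale: since $x$ is fixed, $\rho^2 - u(x)$ is comparable to $\rho^2$, so the two powers of $\rho^{2\beta}$ cancel up to a constant.)

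Once $|\nabla^2 u|$ is globally bounded, the equation $\sigma_{n-1}(\nabla^2 u) = 1$ becomes uniformly elliptic on $\mathbb{R}^n$ (the admissibility cone condition together with the upper bound on $\lambda(\nabla^2 u)$ bounds the eigenvalues from below away from the degenerate locus), and it is concave in $\nabla^2 u$ on $\Gamma_{n-1}$. Differentiating the equation once, each first derivative $u_e = \partial_e u$ satisfies a linear uniformly elliptic equation $a^{ij}(x) \partial_i\partial_j u_e = 0$ with $a^{ij} = \sigma_{n-1}^{ij}(\nabla^2 u)$; by Evans--Krylov interior estimates we get $\nabla^2 u \in C^\alpha_{loc}$ with a uniform interior bound, hence a global $C^\alpha$ bound, and then the standard Liouville theorem for entire solutions of concave uniformly elliptic equations with bounded Hessian (cf.\ Caffarelli--Cabr\'e) forces $\nabla^2 u$ to be constant, i.e.\ $u$ is a quadratic polynomial. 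The main obstacle I anticipate is precisely the uniform gradient estimate on the rescaled functions and the verification that the sublevel sets $\Omega_\rho$ are well-behaved (bounded, with the solution vanishing exactly on the boundary and staying admissible); the rest is bookkeeping with the scaling and a by-now-classical Liouville argument.
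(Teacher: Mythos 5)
Your overall strategy---scale to sublevel sets $\Omega_\rho$, apply the Pogorelov estimate on each, let $\rho\to\infty$ to get a global Hessian bound, then conclude via Evans--Krylov---is the same scaling-and-Liouville skeleton the paper uses. But you diverge at one load-bearing step, and that is where your proposal has a genuine gap. You apply Theorem~\ref{main-2} to the rescaled solution $u_\rho$ and then need a \emph{uniform interior gradient bound} on $u_\rho$, because the constant in Theorem~\ref{main-2} depends on $\sup|\nabla u|$. You acknowledge this is ``the one nonroutine input'' and propose to get it either ``via the well-known gradient estimates for $k$-Hessian equations'' or ``simply from convexity-type barriers on the sublevel sets.'' The barrier argument does not go through: an $(n-1)$-admissible solution is merely subharmonic, not convex, and the sublevel sets $\{u<\rho^2\}$ need not be convex, so there is no elementary barrier giving a gradient bound. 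An interior gradient estimate for $\sigma_k(\nabla^2 u)=\psi(x)$ does exist in the literature (Trudinger, Chou--Wang), but it is a substantive, separate result that you would have to cite and verify is applicable with constants uniform under your parabolic rescaling; as written, this step is invoked rather than established.

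The paper sidesteps this entirely, and this is the point worth internalizing: it does \emph{not} reuse Theorem~\ref{main-2}. Instead it proves a tailored Pogorelov estimate (Lemma~\ref{rigidity-01}) for the specific problem $\sigma_{n-1}(\nabla^2 u)=1$, $u=0$ on $\partial\Omega$, in which the test function is
\begin{equation*}
P(x)=\ln\lambda_1+\beta\ln(-u)+\tfrac{1}{2}|x|^2,
\end{equation*}
i.e.\ the term $\tfrac{B}{2}|\nabla u|^2$ from the proof of Theorem~\ref{main-2} is replaced by $\tfrac{1}{2}|x|^2$. After using the first-order critical-point equation to absorb the $u_i/u$ terms, the only new ``bad'' contribution is $-\tfrac{C}{\beta}\sum_i F^{ii}$ with $C$ controlled by $\sup_\Omega|x|^2$, i.e.\ by the diameter of $\Omega$. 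Consequently the constants in $(-u)^\beta|\nabla^2 u|\le C$ depend only on $n$ and $\mathrm{diam}(\Omega)$, with no reference to $\sup|\nabla u|$. Since the quadratic growth hypothesis makes the rescaled sublevel domains $\Omega_R=\{y:u(Ry)\le R^2\}$ uniformly bounded, the constants are absolute, and the scaling $\nabla^2_y v=\nabla^2_x u$ immediately gives the global Hessian bound, after which the Evans--Krylov blow-down $|D^2u|_{C^\alpha(B_R)}\le C R^{-\alpha}\to 0$ finishes the proof exactly as you outline. So: your end-game is right, your intermediate step is the weak link, and the fix is to derive a gradient-independent Pogorelov estimate directly (as in Lemma~\ref{rigidity-01}) rather than to quote Theorem~\ref{main-2} and patch it with an external gradient estimate.
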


It is worth noting that whether in establishing the pogorelov type $C^2$ estimates and deriving the rigidity theorem, the main difficulty to overcome is how to deal with the third order derivatives. For this purpose, we establish the following concavity inequality of the Hessian operator for $k=n-1$.

\begin{lemma}\label{pro-55-33}
Let $n\geq 3$, let $F=\sigma_{n-1}$ and $\lambda=(\lambda_1, \cdots, \lambda_n)\in \Gamma_{n-1}$ with
$$\lambda_1=\cdots=\lambda_m>\lambda_{m+1} \geq \cdots \geq \lambda_n.$$
Then there exists  constant $C>1$ depending on $n, F$ and small constant $\delta_0>0$ depending on depending on $n$ and $\max F$ such that if $\lambda_1 \geq C$, then  the following inequlity holds
\begin{eqnarray}\label{pro-55443-33}
-\sum_{p\neq q} F^{pp, qq} \xi_{p} \xi_{q}+ K\frac{\left(\sum_i F^{ii} \xi_{i}\right)^2}{F}+2 \sum_{i>m} \frac{F^{ii} \xi_{i}^2}{\lambda_1-\lambda_i} \geq (1+\delta_0) \frac{ F^{11}\xi_{1}^2}{\lambda_1}
\end{eqnarray}
for some sufficient large $K>0$ (depending on $\delta_0$), where $\xi=(\xi_1, \cdots, \xi_n)\in \mathbb{R}^n$ with $\xi_i=0$ for $1<i \leq m$.
\end{lemma}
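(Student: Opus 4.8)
The plan is to reduce the inequality to a one-dimensional problem by exploiting the structure forced by the hypothesis $\xi_i = 0$ for $1 < i \le m$. Since the only nonzero components of $\xi$ among the first block are $\xi_1$, and the remaining free components are $\xi_j$ for $j > m$, I would first write out all three terms of \eqref{pro-55443-33} in coordinates. The key algebraic input is the explicit second-derivative formula for $\sigma_{n-1}$: for $p \ne q$ one has $F^{pp,qq} = \sigma_{n-3}(\lambda \mid pq)$ (the $(n-3)$-th elementary symmetric polynomial in the variables with $\lambda_p, \lambda_q$ deleted), and $F^{pp} = \sigma_{n-2}(\lambda \mid p)$. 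Because $k = n-1$ is so close to $n$, these have very few terms; in particular $F^{pp,qq}$ vanishes unless we delete exactly two indices from an $n$-variable $\sigma_{n-3}$, so it equals a single product $\prod_{i \ne p,q} \lambda_i / (\text{one factor})$-type expression. I would record these identities carefully, together with the Newton-type relations linking $F$, $F^{ii}$, and $\lambda_i F^{ii}$.

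Next I would split the analysis according to whether $\lambda$ is close to the degenerate ray $\lambda_1 = \cdots = \lambda_n$ or not, mirroring the semi-convex/non-semi-convex dichotomy that Lu–Tsai used. In the ``balanced'' regime where $\lambda_n$ is comparable to $\lambda_1$, all the $F^{ii}$ are comparable, the differences $\lambda_1 - \lambda_i$ need not be small, and the third term on the left together with the $K(\sum F^{ii}\xi_i)^2/F$ term should dominate the right side directly by Cauchy–Schwarz after choosing $K$ large; the cross term $-\sum_{p \ne q} F^{pp,qq}\xi_p\xi_q$ is then a lower-order perturbation controlled by the same Cauchy–Schwarz once $\lambda_1 \ge C$. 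In the ``unbalanced'' regime where some $\lambda_i$ with $i > m$ is much smaller than $\lambda_1$, the corresponding gap $\lambda_1 - \lambda_i$ is of order $\lambda_1$, making $2 F^{ii}\xi_i^2/(\lambda_1 - \lambda_i)$ genuinely comparable to $F^{11}\xi_1^2/\lambda_1$ when the cross terms try to exploit $\xi_i\xi_1$; here I would treat the quadratic form in the two variables $(\xi_1, \xi_i)$ and show the relevant $2\times 2$ matrix is positive definite with the stated slack $1 + \delta_0$.

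Concretely, the mechanism for the extra factor $1 + \delta_0$ should come from the following: the ``bad'' part of $-\sum_{p\ne q}F^{pp,qq}\xi_p\xi_q$ is $-2\sum_{i>m}F^{11,ii}\xi_1\xi_i$, and the completed-square bound
\begin{equation*}
-2 F^{11,ii}\xi_1\xi_i + 2\frac{F^{ii}\xi_i^2}{\lambda_1-\lambda_i} \ge -\frac{(\lambda_1-\lambda_i)(F^{11,ii})^2}{2 F^{ii}}\,\xi_1^2
\end{equation*}
reduces everything to checking the scalar inequality
\begin{equation*}
\sum_{i>m}\frac{(\lambda_1-\lambda_i)(F^{11,ii})^2}{2F^{ii}} + (1+\delta_0)\frac{F^{11}}{\lambda_1} \le K\frac{(F^{11})^2}{F} + (\text{remaining good terms}),
\end{equation*}
which, after dividing by $F^{11}$ and using the explicit $\sigma_{n-1}$ formulas, becomes a rational inequality in $\lambda_1/\lambda_i$ that holds with room to spare once $\lambda_1$ is large, because $F^{11,ii}/F^{ii} = \sigma_{n-3}(\lambda\mid 1i)/\sigma_{n-2}(\lambda\mid i)$ decays like $1/\lambda_1$. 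The $\xi_i\xi_j$ cross terms with $i,j > m$ are handled by absorbing them into the diagonal $F^{ii}\xi_i^2/(\lambda_1-\lambda_i)$ terms via Cauchy–Schwarz, again at the cost of a factor that tends to $1$ as $\lambda_1 \to \infty$.

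The main obstacle I anticipate is the bookkeeping in the unbalanced regime: one must choose $\delta_0$ (depending only on $n$ and $\max F$) and $C$ uniformly so that the scalar rational inequality above holds simultaneously for every configuration of the $\lambda_i$, $i > m$, including the borderline cases where one $\lambda_i$ is negative (allowed since $\lambda \in \Gamma_{n-1}$, not $\Gamma_n$) and the case $m = n-1$ where there is a single free index. Keeping the dependence of $\delta_0$ on $\max F$ honest — rather than on $\min\psi$ or on $\lambda$ — requires showing that $F$ and the $F^{ii}$ are pinched appropriately once $\lambda_1 \ge C$, which is where the constant $C$ depending on $n$ and $F$ enters. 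Once that uniform scalar estimate is in hand, the passage back to the full quadratic form is routine Cauchy–Schwarz.
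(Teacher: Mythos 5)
Your high-level plan (dichotomize into a semi-convex regime $\lambda_n\geq -A$ and a non-semi-convex regime $\lambda_n\leq -A$, use explicit $\sigma_{n-1}$ algebra in the latter, and cite a Zhang-style lemma in the former) matches the paper's structure: the paper's Lemma~\ref{pro-55-33} is proved by combining Lemma~\ref{pro-55} (the case $\lambda_n\leq -A$, adapted from Lu--Tsai) with Lemma~\ref{pro-55-1} (the case $\lambda_n\geq -A$, adapted from Zhang). But the execution you describe has two genuine problems.

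First, in the ``unbalanced'' case your pairwise completed-square scheme double-spends the absorbing term. You propose to use $2F^{ii}\xi_i^2/(\lambda_1-\lambda_i)$ to swallow the cross terms $-2F^{11,ii}\xi_1\xi_i$, \emph{and then again} to swallow the cross terms $-\sum_{i\neq j>m}F^{ii,jj}\xi_i\xi_j$. Once you have spent the full $\tfrac{2F^{ii}\xi_i^2}{\lambda_1-\lambda_i}$ on the $\xi_1\xi_i$ block, it is gone; splitting it between the two absorptions worsens both Cauchy--Schwarz constants and you no longer get the margin $1+\delta_0$ for free. The paper sidesteps this entirely: using $\Lambda=-\sigma_n$ and the identities $F^{ii}=F/\lambda_i+\Lambda/\lambda_i^2$, $F^{ii,jj}=F/(\lambda_i\lambda_j)+\Lambda(\lambda_i+\lambda_j)/(\lambda_i^2\lambda_j^2)$, it rewrites
\begin{equation*}
-\sum_{p\neq q}F^{pp,qq}\xi_p\xi_q+\frac{\bigl(\sum_iF^{ii}\xi_i\bigr)^2}{F}
=\frac{\Lambda^2}{F}\Bigl(\sum_p\frac{\xi_p}{\lambda_p^2}\Bigr)^2+F\sum_i\frac{\xi_i^2}{\lambda_i^2}+2\Lambda\sum_i\frac{\xi_i^2}{\lambda_i^3},
\end{equation*}
so that \emph{all} the off-diagonal $\xi_p\xi_q$ contributions collapse into a single manifestly nonnegative rank-one square, and the whole discrepancy (LHS minus RHS of the lemma) becomes $\Lambda\cdot\eta^T(y^Ty+D)\eta$ for an explicit diagonal $D$ and rank-one $y^Ty$, whose positive definiteness is checked by one determinant. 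This global identity is what makes the extra $\delta_0$ come out cleanly; your pairwise scheme has to reconstruct the same cancellation by hand and, as written, does not.

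Second, and more seriously, the ``balanced'' case is not a Cauchy--Schwarz. When $\lambda_n\geq -A$ but $\lambda_1$ is large, $F^{11}/F^{nn}$ can degenerate and the differences $\lambda_1-\lambda_i$ can be of order $\lambda_1$, so the third term on the left does not directly dominate $(1+\delta_0)F^{11}\xi_1^2/\lambda_1$; one really needs the quotient $q_k=\sigma_k/\sigma_{k-1}$, the concavity estimate for $\partial_\xi^2 q_k$, and a further sub-case split on the sign and size of $\sigma_k(\lambda|1)$ — this is Zhang's Lemma 1.1, which the paper's Lemma~\ref{pro-55-1} reproduces and sharpens to extract the $1+\delta_0$. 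A bare Cauchy--Schwarz with large $K$ does not see that structure; in particular it cannot by itself explain why $\delta_0$ depends only on $n$ and $\max F$ rather than on the configuration of the $\lambda_i$. So the balanced half of your sketch is a genuine gap, not just unfinished bookkeeping.

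What you do get right: the explicit formula $F^{pp,qq}=\sigma_{n-3}(\lambda\mid pq)$, the fact that $F^{11,ii}/F^{ii}\leq 1/\lambda_1$ in the cone (this follows from $\sigma_{n-2}(\lambda\mid i)=\sigma_{n-2}(\lambda\mid 1i)+\lambda_1\sigma_{n-3}(\lambda\mid 1i)$ with both summands positive), and the observation that $\Gamma_{n-1}$ permits exactly one negative $\lambda$. These are the correct raw materials; the missing pieces are the $\Lambda$-identity that packages the cross terms globally, and the Zhang-type argument in the bounded-below case.
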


\begin{remark}
The main idea of our proof is to separate the argument into case $\lambda_n> -A$ and $\lambda_n\leq -A$ ($A>0$) referring to  Zhang \cite{Zhang24} and  Lu-Tsai's work \cite{lu24}, respectively.  In case $\lambda_n>-A$, we 
 optimize the third term of   concavity inequalities established by Zhang so that bad third-order terms can be handled more conveniently in $C^2$ estimates.
Compared to Lu-Tsai's work in case $\lambda_n\leq -A$, we optimize the coeffcient of the term $\frac{ F^{11}\xi_{1}^2}{\lambda_1}$ to $1+\delta_0$ for some small $\delta_0$, which is crucial in proving the rigidity theorem \ref{main-4}. 
\end{remark}

The rest of the paper is organized as follows. In Section 2, we start with some
preliminaries. The proofs of Lemma \ref{pro-55-33} and Theorem \ref{main-2} are respectively given in section 3 and section 4. In the last section, a rigidity theorems of the equations \eqref{digity-00} in case $k=n-1$ is established.


\section{Preliminaries}

Let $\lambda=(\lambda_1,\cdots,\lambda_n)\in\mathbb{R}^n$, we recall
the definition of elementary symmetric function for $1\leq k\leq n$,
\begin{equation}\label{g}
\sigma_k(\lambda)= \sum _{1 \le i_1 < i_2 <\cdots<i_k\leq
n}\lambda_{i_1}\lambda_{i_2}\cdots\lambda_{i_k}.
\end{equation}
We also set $\sigma_0=1$ and $\sigma_k=0$ for $k>n$ or $k<0$. Recall
that the Garding's cone is defined as
\begin{equation}\label{g-k}
\Gamma_k  = \{ \lambda  \in \mathbb{R}^n :\sigma _i (\lambda ) >
0,\forall~ 1 \le i \le k\}.
\end{equation}
We denote $\sigma_{k-1}(\lambda|i)=\frac{\partial
\sigma_k}{\partial \lambda_i}$. Then, we list some properties of
$\sigma_k$ which will be used later.

\begin{proposition}\label{sigma}
Let $\lambda=(\lambda_1,\cdots,\lambda_n)\in\mathbb{R}^n$ and $1\leq
k\leq n$, then we have

\begin{enumerate}
\item [{\em(1)}] $$\Gamma_1\supset \Gamma_2\supset \cdot\cdot\cdot\supset
\Gamma_n.$$
\item [{\em(2)}]    $$\sigma_k(\lambda)=\sigma_k(\lambda|i)
+\lambda_i\sigma_{k-1}(\lambda|i)~ \mbox{for}~1\leq i\leq n.$$
\item [{\em(3)}] 
$$\sum_{i=1}^{n}\sigma_{k-1}(\lambda|i)=(n-k+1)\sigma_{k-1}(\lambda).$$
\item [{\em(4)}] $\sum_{i=1}^{n}\frac{\partial \sigma_{k}^{\frac{1}{k}}}
{\partial \lambda_i}\geq [C^k_n]^{\frac{1}{k}}$ for
$\lambda \in \Gamma_{k}$.
\item [{\em(5)}] $\sigma_k^{\frac{1}{k}}$ is concave in $\Gamma_k$.
\end{enumerate}
\end{proposition}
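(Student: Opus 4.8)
\textbf{Proof proposal for Proposition \ref{sigma}.}

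The plan is to prove the five items essentially in the order stated, since the later ones lean on the earlier ones. For item (1), the inclusion $\Gamma_k \supset \Gamma_{k+1}$ is a direct consequence of the classical Newton--Maclaurin inequalities: if $\sigma_1(\lambda),\dots,\sigma_{k+1}(\lambda)>0$, then the normalized quantities $p_j = \sigma_j/\binom{n}{j}$ satisfy $p_{j-1}p_{j+1}\le p_j^2$, from which $\sigma_j(\lambda)>0$ for all $j\le k+1$ forces $\sigma_j(\lambda)>0$ for all $j\le k$; but in fact the cleanest route is to recall that $\Gamma_k$ is by definition $\{\sigma_i>0,\ 1\le i\le k\}$, so the inclusion $\Gamma_{k+1}\subset\Gamma_k$ is immediate from the definition \eqref{g-k} (each condition defining $\Gamma_{k+1}$ includes all the conditions defining $\Gamma_k$). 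Item (2) is a purely combinatorial identity: expanding $\sigma_k(\lambda)=\sum_{|S|=k}\prod_{j\in S}\lambda_j$ and splitting the sum according to whether $i\in S$ gives $\sigma_k(\lambda)=\sum_{i\notin S}\prod_{j\in S}\lambda_j + \lambda_i\sum_{i\in S'}\prod_{j\in S'\setminus\{i\}}\lambda_j = \sigma_k(\lambda|i)+\lambda_i\sigma_{k-1}(\lambda|i)$, where $\sigma_k(\lambda|i)=\partial\sigma_{k+1}/\partial\lambda_i$ is exactly the sum over $k$-subsets avoiding $i$.

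For item (3), I would sum the identity in (2) over $i=1,\dots,n$. Each monomial $\prod_{j\in S}\lambda_j$ with $|S|=k-1$ appears in $\sigma_{k-1}(\lambda|i)$ precisely when $i\notin S$, i.e.\ for $n-(k-1)=n-k+1$ values of $i$; hence $\sum_i\sigma_{k-1}(\lambda|i)=(n-k+1)\sigma_{k-1}(\lambda)$. Item (4) and item (5) are the substantive analytic facts. For (5), the concavity of $\sigma_k^{1/k}$ on $\Gamma_k$ is a classical theorem (G\aa rding's theory of hyperbolic polynomials: $\sigma_k$ is hyperbolic with respect to the direction $(1,\dots,1)$, and its G\aa rding cone is exactly $\Gamma_k$, so $\sigma_k^{1/k}$ is concave there); I would either cite this directly from \cite{CNS85} or sketch the hyperbolic-polynomial argument. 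For (4), one can argue as follows: since $\sigma_k^{1/k}$ is homogeneous of degree one and concave on the convex cone $\Gamma_k$, the function $t\mapsto \sigma_k^{1/k}((1,\dots,1)+t(\lambda-(1,\dots,1)))$ is concave, and evaluating the superadditivity/homogeneity relation at the point $(1,\dots,1)$ gives $\sum_i \frac{\partial\sigma_k^{1/k}}{\partial\lambda_i}(\lambda) \ge \sum_i \frac{\partial\sigma_k^{1/k}}{\partial\lambda_i}(1,\dots,1)$ by monotonicity of the gradient along the ray; at $\lambda=(1,\dots,1)$ one computes $\sigma_k=\binom{n}{k}$ and $\partial\sigma_k/\partial\lambda_i=\binom{n-1}{k-1}$, whence $\sum_i\partial\sigma_k^{1/k}/\partial\lambda_i = \frac1k \binom{n}{k}^{1/k-1}\cdot n\binom{n-1}{k-1} = \binom{n}{k}^{1/k}$, using $n\binom{n-1}{k-1}=k\binom{n}{k}$.

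The main obstacle is making item (4) rigorous without circularity: the inequality $\sum_i\partial_i\sigma_k^{1/k}\ge \binom{n}{k}^{1/k}$ genuinely requires both the concavity of $\sigma_k^{1/k}$ (item (5)) and the monotonicity $\partial_i\sigma_k>0$ on $\Gamma_k$, the latter itself a standard but nontrivial consequence of $\lambda|i \in \Gamma_{k-1}$ whenever $\lambda\in\Gamma_k$. So the honest dependency order is (1), (2), (3) elementary and self-contained; then establish $\partial_i\sigma_k(\lambda)>0$ on $\Gamma_k$ (e.g.\ via $\lambda|i\in\Gamma_{k-1}$, proved inductively using the Newton--Maclaurin inequalities); then (5) from G\aa rding's theorem; and finally (4) by evaluating the concave homogeneous function $\sigma_k^{1/k}$ and its gradient along the ray from $(1,\dots,1)$ to $\lambda$. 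Throughout I would keep the combinatorial identities (2)–(3) as the workhorses and simply cite \cite{CNS85} or G\aa rding for the hyperbolicity/concavity input rather than reprove it.
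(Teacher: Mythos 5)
Your proposal is correct in substance, but it is worth noting that the paper does not actually prove this proposition at all: its ``proof'' consists entirely of citations (Chapter XV of \cite{Li96} or \cite{Hui99} for (1)--(3), Lemma 2.2.19 of \cite{Ger06} for (4), and \cite{CNS85}, \cite{Li96} for (5)). So your route is genuinely different in that you supply the elementary arguments yourself: (1) is indeed immediate from the definition \eqref{g-k} (the Newton--Maclaurin detour you first mention is unnecessary, as you yourself observe), (2) is the standard split of $k$-subsets according to whether they contain $i$, and (3) follows from the counting argument that each $(k-1)$-monomial avoids exactly $n-k+1$ indices. For (4)--(5) you, like the paper, ultimately lean on the classical concavity of $\sigma_k^{1/k}$ on $\Gamma_k$ (G\aa rding/\cite{CNS85}), which is the right call; reproving it would be out of proportion here. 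The one place where your write-up is imprecise is the mechanism for (4): ``monotonicity of the gradient along the ray'' from $(1,\dots,1)$ to $\lambda$ only yields $\nabla f(\lambda)\cdot(\lambda-e)\le\nabla f(e)\cdot(\lambda-e)$ with $f=\sigma_k^{1/k}$, $e=(1,\dots,1)$, which is not the inequality you need. The clean argument is the supporting-hyperplane inequality $f(e)\le f(\lambda)+\nabla f(\lambda)\cdot(e-\lambda)$ combined with Euler's identity $\nabla f(\lambda)\cdot\lambda=f(\lambda)$ (degree-one homogeneity), giving $\sum_i\partial_i f(\lambda)\ge f(e)=\binom{n}{k}^{1/k}$; your evaluation at $(1,\dots,1)$ via $n\binom{n-1}{k-1}=k\binom{n}{k}$ is exactly right, and with this small repair your outline is complete. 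Your worry about needing $\partial_i\sigma_k>0$ for (4) also dissolves with this formulation, since the tangent-plane argument does not use positivity of the gradient. In short: the paper buys brevity by citation; your approach buys self-containedness at the cost of one slightly fuzzy step that is easily fixed.
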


\begin{proof}
All the above properties are well known. For example, see Chapter XV in
\cite{Li96} or \cite{Hui99} for proofs of (1), (2) and
(3); see Lemma 2.2.19 in \cite{Ger06} for the proof of (4); see
\cite{CNS85} and \cite{Li96} for the proof of (5).
\end{proof}

\begin{proposition}\label{Nm-11}
For $\lambda=(\lambda_1, \cdots, \lambda_n) \in \Gamma_k$ with $\lambda_1\geq \lambda_2 \geq \cdots \geq \lambda_n$.
\begin{enumerate}
\item [{\em(\romannumeral1)}]
$$0<\sigma_{k-1}(\lambda|1)\leq \sigma_{k-1}(\lambda|2)\leq
\cdot\cdot\cdot\leq \sigma_{k-1}(\lambda|n).$$
\item [{\em(\romannumeral2)}] If $\lambda_i \leq 0$, then we have
$$-\lambda_i\leq \frac{n-k}{k} \lambda_1.$$
\item[{\em(\romannumeral3)}]  For any  $1\leq l<k$, we have
$$\sigma_l(\lambda) \geq C(n, l)\lambda_1\cdots\lambda_l.$$
\item[{\em(\romannumeral4)}] There exists a constant depending only on $n$ and $k$
 such that
$$\lambda_1\sigma_{k-1}(\lambda|1)\geq C \sigma_k(\lambda).$$
\item[{\em(\romannumeral5)}]
$$\sum_i \lambda_i^2 \sigma_{k-1} (\lambda|i)\geq \frac{k}{n} \sigma_1(\lambda) \sigma_k(\lambda).$$
\item[{\em(\romannumeral6)}] The number of possible negative entries of $\lambda$ is at most $n-k$ and 
$$\lambda_k+\lambda_{k+1}+\cdots+\lambda_n >0, \quad |\lambda_i| \leq n \lambda_k~\mbox{for any}~i>k.$$
\item[{\em(\romannumeral7)}] $$\sigma_k(\lambda)\leq C_n^k \lambda_1 \cdots \lambda_k.$$
\end{enumerate}
\end{proposition}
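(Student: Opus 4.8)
\emph{Plan of proof.} I would deduce all of (\romannumeral1)--(\romannumeral7) from two classical structural facts about $\Gamma_k$, together with the identity $\sigma_m(\lambda)=\sigma_m(\lambda|i)+\lambda_i\sigma_{m-1}(\lambda|i)$ of Proposition~\ref{sigma}(2). The first fact is the restriction property: if $\lambda\in\Gamma_k\subset\mathbb R^n$ then $(\lambda|i)\in\Gamma_{k-1}\subset\mathbb R^{n-1}$ for each $i$, and, iterating, $(\lambda|i_1\cdots i_j)\in\Gamma_{k-j}$ whenever $j\le k$. In particular, deleting the $k-1$ largest entries and using the ordering gives $(\lambda_k,\dots,\lambda_n)\in\Gamma_1$, i.e. $\lambda_k+\lambda_{k+1}+\cdots+\lambda_n>0$, which is the second half of (\romannumeral6). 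The second fact is the Newton--MacLaurin chain $p_1\ge p_2^{1/2}\ge\cdots\ge p_k^{1/k}$ on $\Gamma_k$, where $p_m:=\sigma_m/\binom nm$. I will also use that on $\Gamma_m$ one has $\partial\sigma_m/\partial\lambda_i=\sigma_{m-1}(\lambda|i)>0$ (by the restriction property), so each $\sigma_m$, $m\le k$, is nondecreasing in every variable on $\Gamma_k$, whence raising a coordinate keeps $\lambda$ in $\Gamma_k$.

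\emph{The self-contained items.} Given the sum inequality above, (\romannumeral6) is immediate: if $\lambda_k<0$ then $\lambda_k,\dots,\lambda_n$ are all negative, contradicting positivity of their sum, so $\lambda_k\ge0$ and at most $n-k$ entries are negative; and for $i>k$, either $\lambda_i\ge0$ and $|\lambda_i|\le\lambda_k\le n\lambda_k$, or $\lambda_i<0$ and $-\lambda_i<\sum_{j\ge k,\,j\ne i}\lambda_j\le(n-k)\lambda_k$. For (\romannumeral1), subtracting $\sigma_{k-1}(\lambda|i)=\sigma_{k-1}(\lambda|ij)+\lambda_j\sigma_{k-2}(\lambda|ij)$ from the analogous identity for $\sigma_{k-1}(\lambda|j)$ gives $\sigma_{k-1}(\lambda|i)-\sigma_{k-1}(\lambda|j)=(\lambda_j-\lambda_i)\sigma_{k-2}(\lambda|ij)$ with $\sigma_{k-2}(\lambda|ij)>0$ since $(\lambda|ij)\in\Gamma_{k-2}$; this yields monotonicity in the index, and positivity of the smallest term is exactly $(\lambda|1)\in\Gamma_{k-1}$. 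For (\romannumeral2), raise $\lambda_2,\dots,\lambda_{n-1}$ up to $\lambda_1$; the vector stays in $\Gamma_k$, so $\mu:=(\lambda_1,\dots,\lambda_1,\lambda_n)$ obeys $0<\sigma_k(\mu)=\binom{n-1}{k}\lambda_1^k+\binom{n-1}{k-1}\lambda_1^{k-1}\lambda_n$, and dividing by $\lambda_1^{k-1}>0$ and using $\binom{n-1}{k}/\binom{n-1}{k-1}=(n-k)/k$ gives $-\lambda_n<\tfrac{n-k}{k}\lambda_1$; for any $\lambda_i\le0$ one has $-\lambda_i\le-\lambda_n$. For (\romannumeral5), a direct expansion of $\sigma_1\sigma_k$ gives $\sum_i\lambda_i^2\sigma_{k-1}(\lambda|i)=\sigma_1\sigma_k-(k+1)\sigma_{k+1}$, so the claim is equivalent to $\tfrac{n-k}{n}\sigma_1\sigma_k\ge(k+1)\sigma_{k+1}$, i.e. $p_1p_k\ge p_{k+1}$; this is trivial when $\sigma_{k+1}\le0$, and when $\sigma_{k+1}>0$ one has $\lambda\in\Gamma_{k+1}$, so MacLaurin gives $p_1\ge p_k^{1/k}\ge p_{k+1}^{1/(k+1)}$ and hence $p_1p_k\ge p_k^{(k+1)/k}\ge p_{k+1}$. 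For (\romannumeral4), write $\sigma_k(\lambda)=\sigma_k(\lambda|1)+\lambda_1\sigma_{k-1}(\lambda|1)$: if $\sigma_k(\lambda|1)\le\tfrac12\sigma_k(\lambda)$ we are done with $C=\tfrac12$, and otherwise $(\lambda|1)\in\Gamma_k\subset\mathbb R^{n-1}$, so MacLaurin in $n-1$ variables together with $\sigma_1(\lambda|1)\le(n-1)\lambda_1$ and $\sigma_k(\lambda)^{1/k}\le\binom nk^{1/k}\lambda_1$ bounds $\lambda_1\sigma_{k-1}(\lambda|1)$ below by a dimensional multiple of $\sigma_k(\lambda)$.

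\emph{The main obstacle.} The remaining items (\romannumeral3) and (\romannumeral7) together amount to the two-sided bound $c(n,l)\,\lambda_1\cdots\lambda_l\le\sigma_l(\lambda)\le\binom nl\lambda_1\cdots\lambda_l$ for $\lambda\in\Gamma_k$ and $l\le k$ (with $l<k$ in (\romannumeral3)), and this is the one point that resists a quick self-contained treatment. The crude estimate $\sigma_k\le\binom nk\lambda_1^k$ follows at once from MacLaurin and $\sigma_1\le n\lambda_1$, but the sharp form with the genuine product $\lambda_1\cdots\lambda_l$ requires controlling the negative cross-terms $\lambda_I$ — which by (\romannumeral2) and (\romannumeral6) satisfy $|\lambda_i|\le\tfrac{n-k}{k}\lambda_1$ and sit at indices beyond the block of nonnegative entries — and the lower bound in (\romannumeral3) needs, in addition, an induction that peels off the nonnegative coordinates one at a time (via the restriction property) while tracking these terms. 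Since (\romannumeral3) and (\romannumeral7) are used here only as black boxes, I would simply invoke the classical references (e.g. \cite{Li96,Hui99} and the literature cited there) for them.
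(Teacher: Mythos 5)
Your proposal is correct, but it takes a genuinely different route from the paper: the paper offers no argument at all for Proposition 2.2, it simply cites the literature (Lemma 2.2 of \cite{lu23} for (\romannumeral1)--(\romannumeral5), \cite{RW-20} for (\romannumeral6), Chapter XV of \cite{Li96} for (\romannumeral7)), whereas you derive (\romannumeral1), (\romannumeral2), (\romannumeral4), (\romannumeral5), (\romannumeral6) from first principles --- the restriction property of the G\aa rding cones, the identity $\sigma_k=\sigma_k(\lambda|i)+\lambda_i\sigma_{k-1}(\lambda|i)$, and Newton--MacLaurin --- deferring only (\romannumeral3) and (\romannumeral7) to references. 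Your individual arguments check out: the telescoping identity $\sigma_{k-1}(\lambda|i)-\sigma_{k-1}(\lambda|j)=(\lambda_j-\lambda_i)\sigma_{k-2}(\lambda|ij)$ gives (\romannumeral1); the comparison vector $(\lambda_1,\dots,\lambda_1,\lambda_n)\in\Gamma_k$ gives (\romannumeral2) with the correct ratio $\frac{n-k}{k}$ (the step ``raising coordinates keeps you in $\Gamma_k$'' deserves the usual one-line continuity/connectedness remark, since monotonicity of the $\sigma_j$ must be applied along a path only known a posteriori to stay in the cone); the identity $\sum_i\lambda_i^2\sigma_{k-1}(\lambda|i)=\sigma_1\sigma_k-(k+1)\sigma_{k+1}$ together with $p_1p_k\ge p_{k+1}$ (trivial when $\sigma_{k+1}\le 0$, MacLaurin when $\lambda\in\Gamma_{k+1}$) gives exactly (\romannumeral5); and the dichotomy on $\sigma_k(\lambda|1)$, with $(\lambda|1)\in\Gamma_k(\mathbb{R}^{n-1})$ in the second case and MacLaurin in $n-1$ variables plus $\sigma_1(\lambda|1)\le(n-1)\lambda_1$, gives (\romannumeral4); your treatment of (\romannumeral6) via $(\lambda_k,\dots,\lambda_n)\in\Gamma_1$ is also fine. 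What the paper's approach buys is brevity; what yours buys is a mostly self-contained verification that makes explicit which standard facts are needed --- though, as you candidly note, the two product bounds (\romannumeral3) and (\romannumeral7), which are the only items requiring real work, are in the end taken from the literature, exactly as the paper does.
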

\begin{proof}
See \cite[Lemma 2.2]{lu23} for  \em(\romannumeral1)-\em(\romannumeral5); see \cite{RW-20} for \em(\romannumeral6); see Chapter XV in \cite{Li96} for \em(\romannumeral7).
\end{proof}

The generalized Newton-MacLaurin inequality is as follows, which
will be used all the time.
\begin{proposition}\label{NM}
For $\lambda \in \Gamma_m$ and $m > l \geq 0$, $ r > s \geq 0$, $m
\geq r$, $l \geq s$, we have
\begin{align}
\Bigg[\frac{{\sigma _m (\lambda )}/{C_n^m }}{{\sigma _l (\lambda
)}/{C_n^l }}\Bigg]^{\frac{1}{m-l}} \le \Bigg[\frac{{\sigma _r
(\lambda )}/{C_n^r }}{{\sigma _s (\lambda )}/{C_n^s
}}\Bigg]^{\frac{1}{r-s}}. \notag
\end{align}
\end{proposition}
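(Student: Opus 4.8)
The plan is to reduce the inequality to the classical one-variable log-concavity of the normalized elementary symmetric functions, and then to a monotonicity-of-averages estimate. Throughout, write $p_j=p_j(\lambda):=\sigma_j(\lambda)/C_n^j$ for $0\le j\le n$, so that $p_0=1$ and the claimed inequality reads $(p_m/p_l)^{1/(m-l)}\le(p_r/p_s)^{1/(r-s)}$. Since $\lambda\in\Gamma_m$ we have $\sigma_1,\dots,\sigma_m>0$, and because $s<r\le m$ and $s\le l<m$ all four of $p_s,p_l,p_r,p_m$ are strictly positive; in particular the roots and logarithms taken below are legitimate.

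\emph{Step 1 (one-variable input).} First I would record the classical Newton inequality: for \emph{every} $\lambda\in\mathbb{R}^n$ and every $1\le j\le n-1$,
\[
p_{j-1}(\lambda)\,p_{j+1}(\lambda)\ \le\ p_j(\lambda)^2 .
\]
The quickest route is the standard Rolle argument. The polynomial $P(x)=\prod_{i=1}^n(1+\lambda_i x)=\sum_{k=0}^n C_n^k\,p_k\,x^k$ has only real zeros; real-rootedness is preserved both by $P\mapsto P'$ (Rolle's theorem) and by $P(x)\mapsto x^{\deg P}P(1/x)$; applying these two operations the appropriate number of times produces a real-rooted quadratic whose three coefficients are proportional to $p_{j-1},p_j,p_{j+1}$, and the nonnegativity of its discriminant is exactly the displayed inequality. (This is entirely classical and could also simply be quoted.)

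\emph{Step 2 (monotone ratios in $\Gamma_m$).} Set $q_j:=p_j/p_{j-1}$ for $1\le j\le m$; these are well defined and strictly positive since $p_0=1$ and $p_1,\dots,p_m>0$ in $\Gamma_m$. For $1\le j\le m-1$ the Newton inequality with middle index $j$, divided by the positive number $p_{j-1}p_j$, rearranges to $p_{j+1}/p_j\le p_j/p_{j-1}$, that is $q_{j+1}\le q_j$; hence $q_1\ge q_2\ge\cdots\ge q_m>0$. Telescoping gives $p_m/p_l=\prod_{j=l+1}^m q_j$ and $p_r/p_s=\prod_{j=s+1}^r q_j$, with all indices in $\{1,\dots,m\}$ because $l+1\ge1$ and $r\le m$. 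Putting $b_j:=\log q_j$ (so that $(b_j)$ is non-increasing) and $M(I):=\frac1{|I|}\sum_{j\in I}b_j$, the proposition becomes $M(I_1)\le M(I_2)$ with $I_1:=\{l+1,\dots,m\}$ and $I_2:=\{s+1,\dots,r\}$.

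\emph{Step 3 (averaging lemma) and the main point.} Here the hypotheses $s\le l$ and $r\le m$ are used. Put $J:=\{s+1,\dots,m\}$, so that $I_1,I_2\subseteq J$, $J\setminus I_1=\{s+1,\dots,l\}$ and $J\setminus I_2=\{r+1,\dots,m\}$. Since $(b_j)$ is non-increasing, every entry indexed by $J\setminus I_1$ is $\ge b_{l+1}$ while every entry indexed by $I_1$ is $\le b_{l+1}$, so $M(J)$, being a weighted average of $M(I_1)$ and $M(J\setminus I_1)$ with $M(I_1)\le b_{l+1}\le M(J\setminus I_1)$, satisfies $M(I_1)\le M(J)$; symmetrically, every entry indexed by $J\setminus I_2$ is $\le b_r$ while every entry indexed by $I_2$ is $\ge b_r$, which gives $M(J)\le M(I_2)$. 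Chaining these yields $M(I_1)\le M(I_2)$, and exponentiating is the proposition. The only step that is not mere bookkeeping is this last one: one cannot simply bound the left-hand geometric mean by $\max_j q_j$ and the right-hand one by $\min_j q_j$, because $I_1$ and $I_2$ may have very different lengths, so the two-move interval deformation (enlarge to the left, then trim from the right), each move monotone in the average, is genuinely needed. Step 1 is classical, while Step 2 is only the verification of which $p_j$ are positive, which is exactly where the hypothesis $\lambda\in\Gamma_m$ (rather than merely $\lambda\in\mathbb{R}^n$) enters.
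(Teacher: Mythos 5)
Your argument is correct. Note, however, that the paper does not prove this proposition at all: its ``proof'' is a one-line citation to \cite{S05}, so there is no in-paper argument to compare with. What you wrote is essentially the standard derivation that such references contain: Newton's inequality $p_{j-1}p_{j+1}\le p_j^2$ for the normalized functions $p_j=\sigma_j/C_n^j$ gives log-concavity of $j\mapsto p_j$, the hypothesis $\lambda\in\Gamma_m$ guarantees $p_1,\dots,p_m>0$ so the ratios $q_j=p_j/p_{j-1}$ form a positive non-increasing sequence, and the claimed inequality is exactly the monotonicity of the difference quotients (averages of $\log q_j$ over the index blocks $\{l+1,\dots,m\}$ and $\{s+1,\dots,r\}$), which your two-step comparison through the enlarged block $\{s+1,\dots,m\}$ establishes cleanly, including the degenerate cases $s=l$ or $r=m$. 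The only place you are slightly sketchy is Step 1: the reversal $P(x)\mapsto x^{\deg P}P(1/x)$ needs a word when some $\lambda_i=0$ (the degree drops and the binomial normalizations shift), but this is harmless --- Newton's inequalities for arbitrary real $\lambda$ are classical and may be quoted outright, or the degenerate case recovered by continuity from eigenvalue tuples with all entries nonzero. So your proposal supplies a complete, self-contained proof of a fact the paper only cites.
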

\begin{proof}
See \cite{S05}.
\end{proof}

Next, we list the following well-known results.

\begin{lemma}\label{Gao0}
If $W=(w_{ij})$ is a symmetric real matrix, $\lambda_i=\lambda_i(W)$
is one of the eigenvalues ($i = 1, \cdots, n$) and
$F=F(W)=f(\lambda(W))$ is a symmetric function of $\lambda_1, \cdots,
\lambda_n$, then for any real symmetric matrix $A= (a_{ij})$, we
have the following formulas:
\begin{eqnarray}\label{Pre-2}
\frac{\partial^2 F}{\partial w_{ij}\partial
w_{st}}a_{ij}a_{st}=\frac{\partial^2 f}{\partial\lambda_p
\partial\lambda_q}a_{pp}a_{qq}+2\sum_{p<q}\frac{\frac{\partial
f}{\partial \lambda_p}-\frac{\partial f}{\partial
\lambda_q}}{\lambda_p-\lambda_q}a^{2}_{pq}.
\end{eqnarray}
Moreover, if $f$ is concave and $\lambda_1\geq \lambda_2\geq \cdots
\geq \lambda_n$, we have
\begin{eqnarray}\label{Pre-3}
\frac{\partial f}{\partial \lambda_1}(\lambda)\leq \frac{\partial
f}{\partial \lambda_2}(\lambda)\leq \cdots \leq \frac{\partial
f}{\partial \lambda_n}(\lambda).
\end{eqnarray}
\end{lemma}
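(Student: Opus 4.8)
The plan is to split the argument according to the size of the most negative eigenvalue $\lambda_n$, following the two-regime strategy indicated in the Remark: the \emph{semi-convex regime} $\lambda_n > -A$ and the \emph{non-semi-convex regime} $\lambda_n \le -A$, for a constant $A>0$ to be fixed depending on $n$ and $\max F$. Throughout I will use the abbreviations $F^{ii} = \sigma_{n-2}(\lambda|i)$ and $F^{ii,jj} = \partial^2 \sigma_{n-1}/\partial\lambda_i\partial\lambda_j$; since $\sigma_{n-1}$ is a polynomial of degree $n-1$, one has the explicit identities $F^{ii,ii}=0$ and $F^{ii,jj}=\sigma_{n-3}(\lambda|ij)$ for $i\ne j$, which will be the computational backbone. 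Because $\xi_i=0$ for $1<i\le m$, every sum over the ``bad'' directions effectively runs over the index $1$ (the top, $m$-fold eigenvalue) and the indices $i>m$; this is what makes the inequality tractable.

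First I would treat the semi-convex case $\lambda_n>-A$. Here $\lambda$ is pinched: $|\lambda_i|\le A$ for all $i>m$ whenever $\lambda_i<0$, and in fact all eigenvalues except possibly the top ones are bounded in absolute value by a multiple of $\max F$ and $A$ once $\lambda_1\ge C$ is large (using Proposition~\ref{Nm-11}(ii),(vi) and $\sigma_{n-1}=F$). The idea, as in Zhang \cite{Zhang24}, is to keep the term $2\sum_{i>m} F^{ii}\xi_i^2/(\lambda_1-\lambda_i)$ and show it already dominates, or to combine it with a small multiple of the Cauchy--Schwarz term $K(\sum_i F^{ii}\xi_i)^2/F$. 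Concretely, expand $-\sum_{p\ne q}F^{pp,qq}\xi_p\xi_q$ using the explicit formula for $F^{pp,qq}$; the only surviving cross terms pair index $1$ with index $i>m$ and indices $i,j>m$ with each other. One then estimates each such term: the coefficient of $\xi_1^2$ coming naturally from this expansion (via the identity $F^{11,ii}=\sigma_{n-3}(\lambda|1i)$ and the comparison $\sigma_{n-3}(\lambda|1i)/\sigma_{n-2}(\lambda|1)\sim 1/\lambda_i$ for the bounded eigenvalue $\lambda_i$) produces a factor comparable to $F^{11}\xi_1^2/\lambda_1$, and one must check the constant. The cross terms $\xi_1\xi_i$ are absorbed by Cauchy--Schwarz into the $F^{11}\xi_1^2/\lambda_1$ term plus the $\sum_{i>m}F^{ii}\xi_i^2/(\lambda_1-\lambda_i)$ term (using $\lambda_1-\lambda_i\ge \lambda_1/2$ for large $\lambda_1$), and the pure $\xi_i\xi_j$ terms are handled similarly or are of lower order. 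The output in this regime is an inequality with the sharp constant $1+\delta_0$ provided $\delta_0$ is small and $C$ large, as desired.

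The harder case is $\lambda_n\le -A$, the non-semi-convex regime, where one cannot bound the negative eigenvalues and must argue as in Lu--Tsai \cite{lu24}. Here the crucial structural fact (specific to $k=n-1$) is that there is exactly one possibly-very-negative eigenvalue, namely $\lambda_n$, while $\lambda_1,\dots,\lambda_{n-1}$ satisfy $\lambda_i>0$ and, from $\sigma_{n-1}=F$ together with $\sigma_{n-2}(\lambda|n)\ge$ (something positive), one gets $\lambda_1\cdots\lambda_{n-1}$ comparable to $F/(-\lambda_n)\cdot(\text{controlled factors})$; more precisely $F^{nn}=\sigma_{n-2}(\lambda|n)=\sigma_{n-2}(\lambda_1,\dots,\lambda_{n-1})$ is the dominant diagonal term and $F^{ii}$ for $i<n$ contains a factor $(-\lambda_n)$, hence $F^{nn}\ll F^{ii}$. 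One then shows the term $2F^{nn}\xi_n^2/(\lambda_1-\lambda_n)$ is not needed (it is small and nonnegative, so it can only help), and the real work is proving
\[
-\sum_{p\ne q}F^{pp,qq}\xi_p\xi_q + K\frac{(\sum_i F^{ii}\xi_i)^2}{F} + 2\sum_{m<i<n}\frac{F^{ii}\xi_i^2}{\lambda_1-\lambda_i}\ \ge\ (1+\delta_0)\frac{F^{11}\xi_1^2}{\lambda_1}.
\]
The second-derivative term is expanded explicitly; its most dangerous piece is $-2F^{1n,1n}$-type contributions pairing $\xi_1$ with $\xi_n$, i.e.\ $-2\sigma_{n-3}(\lambda|1n)\xi_1\xi_n$, and since $\sigma_{n-3}(\lambda|1n)=\sigma_{n-3}(\lambda_2,\dots,\lambda_{n-1})$ does \emph{not} see $\lambda_n$, this is actually comparable to $F^{11}/\lambda_1$ in size — one writes $\sigma_{n-3}(\lambda|1n)\cdot\lambda_1 \le \sigma_{n-2}(\lambda|n)\cdot(\text{const})$ etc. The term $K(\sum_i F^{ii}\xi_i)^2/F$ is the key reservoir: because $F^{nn}\xi_n$ is negligible, $\sum_i F^{ii}\xi_i \approx F^{11}\xi_1 + \sum_{m<i<n}F^{ii}\xi_i$, and dividing by $F\approx (-\lambda_n)\sigma_{n-2}(\lambda_1,\dots,\lambda_{n-1})/(\text{const})$ — no wait, $F=\sigma_{n-1}(\lambda)=\sigma_{n-1}(\lambda_1,\dots,\lambda_{n-1})+\lambda_n\sigma_{n-2}(\lambda_1,\dots,\lambda_{n-1})$ — so one uses $\sigma_{n-1}(\lambda)>0$ to bound $(-\lambda_n)<\sigma_{n-1}(\lambda_1,\dots,\lambda_{n-1})/\sigma_{n-2}(\lambda_1,\dots,\lambda_{n-1})$ from above; combining these, $K(\sum F^{ii}\xi_i)^2/F$ absorbs all cross terms into a controlled multiple of $F^{11}\xi_1^2/\lambda_1$ once $K$ is taken large depending on $\delta_0$. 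Optimizing all constants so the final coefficient is exactly $1+\delta_0$ rather than $1$ (which is what Lu--Tsai get) is the delicate bookkeeping point flagged in the Remark, and it is needed later for the rigidity theorem. I expect the main obstacle to be this last constant optimization in the non-semi-convex case: one must track every Cauchy--Schwarz split with an explicit weight $\epsilon$ and verify that the ``leftover'' negative contribution to the $\xi_1^2$ coefficient can be made smaller than $\delta_0 F^{11}/\lambda_1$ by choosing $A$ large, $C$ large, and $\delta_0$ small in the right order, while the two regimes are glued along $\lambda_n=-A$ with compatible constants.
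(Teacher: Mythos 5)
Your proposal does not address the statement at hand: what you have written is a proof sketch for Lemma \ref{pro-55-33} (the concavity inequality for the $(n-1)$-Hessian operator, with its split into the semi-convex regime $\lambda_n>-A$ and the non-semi-convex regime $\lambda_n\le -A$), whereas the statement to be proved is Lemma \ref{Gao0}, a general fact about an arbitrary symmetric function $F(W)=f(\lambda(W))$ of the eigenvalues of a symmetric matrix: the second-derivative identity \eqref{Pre-2}, expressing $\frac{\partial^2 F}{\partial w_{ij}\partial w_{st}}a_{ij}a_{st}$ through $\frac{\partial^2 f}{\partial\lambda_p\partial\lambda_q}$ and the difference-quotient terms $\frac{f_p-f_q}{\lambda_p-\lambda_q}a_{pq}^2$, together with the ordering \eqref{Pre-3} of the first derivatives when $f$ is concave. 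Nothing in your argument concerns these claims: there is no perturbation analysis of the eigenvalues $\lambda_i(W)$, no computation of $\partial^2 F/\partial w_{ij}\partial w_{st}$ at a diagonal matrix, and no use of concavity of $f$ to compare $\partial f/\partial\lambda_1$ with $\partial f/\partial\lambda_n$; instead everything is specialized to $f=\sigma_{n-1}$ and to the inequality \eqref{pro-55443-33}, which is a different result proved elsewhere in the paper.

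For the record, the paper disposes of Lemma \ref{Gao0} by citation: \eqref{Pre-2} is Lemma 3.2 and \eqref{Pre-3} is Lemma 2.2 of Andrews \cite{And94}. If you wanted to prove it rather than cite it, the standard route for \eqref{Pre-2} is second-order perturbation theory for eigenvalues of symmetric matrices (or the formulas in Lemma \ref{pro-54}): at a diagonal matrix $W$ with distinct eigenvalues one has $\frac{\partial^2\lambda_i}{\partial w_{ij}\partial w_{ji}}=\frac{1}{\lambda_i-\lambda_j}$ for $i\ne j$, and combining this with the chain rule for $F=f(\lambda(W))$ along the perturbation $W+tA$ yields exactly the two groups of terms in \eqref{Pre-2}, with the degenerate case of repeated eigenvalues handled by continuity of difference quotients of a smooth symmetric $f$. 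For \eqref{Pre-3}, one uses the symmetry of $f$: the point $\tilde\lambda$ obtained from $\lambda$ by swapping $\lambda_p$ and $\lambda_q$ satisfies $f(\tilde\lambda)=f(\lambda)$, and concavity of $f$ along the segment from $\lambda$ to $\tilde\lambda$ gives $\bigl(\frac{\partial f}{\partial\lambda_p}(\lambda)-\frac{\partial f}{\partial\lambda_q}(\lambda)\bigr)(\lambda_p-\lambda_q)\le 0$, which is precisely the monotonicity \eqref{Pre-3}. As it stands, your submission proves neither identity and so leaves the entire statement unestablished.
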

\begin{proof}
See Lemma 3.2 in   \cite{And94} for the proof of
\eqref{Pre-2} and Lemma 2.2 in \cite{And94} for the proof
\eqref{Pre-3}.
\end{proof}

\begin{lemma}\label{pro-54}
Let $W={W_{ij}}$ be an $n\times n$ symmetric matrix and $\lambda(W)=(\lambda_{1},\lambda_{2},\cdots,\lambda_{n})$ be the eigenvalues of the symmetric matrix $W$. Suppose that $W={W_{ij}}$ is diagonal and $\lambda_{i}=W_{ii}$, then we have
\begin{eqnarray}
\frac{\partial{\lambda_{i}}}{\partial{W_{ii}}}=1,\quad \frac{\partial{\lambda_{k}}}{\partial{W_{ij}}}=0\quad \mbox{otherwise}.
\end{eqnarray}
\begin{eqnarray}
\frac{\partial^{2} \lambda_{i}}{\partial{W_{ij}}\partial{W_{ji}}}=\frac{1}{\lambda_{i}-\lambda_{j}}\quad \mbox{for}~ i\neq j \quad and \quad \lambda_{i}\neq \lambda_{j}.
\end{eqnarray}
\begin{eqnarray}
\frac{\partial^{2} \lambda_{i}}{\partial{W_{kl}}\partial{W_{pq}}}=0 \quad \mbox{otherwise}.
\end{eqnarray}
\end{lemma}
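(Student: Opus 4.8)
The result is the classical first- and second-order perturbation formula for a simple eigenvalue, and the plan is to obtain all three assertions simultaneously by producing the second-order Taylor expansion of $\lambda_i$ about the diagonal matrix $W$ and then reading off its coefficients, treating the $n^2$ entries $W_{kl}$ as independent coordinates. Write $e_1,\dots,e_n$ for the standard basis, so $We_j=\lambda_je_j$. The clauses containing $\tfrac{1}{\lambda_i-\lambda_j}$ are asserted only when $\lambda_i\neq\lambda_j$, so I will work under the hypothesis that $\lambda_i$ is a \emph{simple} eigenvalue of $W$ (which is exactly the situation those clauses describe; the first-order identities then follow in general by a limiting argument). For a small symmetric perturbation $H=(h_{kl})$ the matrix $W+H$ then has a unique eigenvalue $\mu(H)$ near $\lambda_i$ and a unique eigenvector $v(H)=e_i+w(H)$ with $\langle w(H),e_i\rangle=0$, both smooth in $H$; this is the implicit function theorem applied to the system $(W+H-\mu)v=0$, $\langle v,e_i\rangle=1$, whose linearization at $H=0$, $\mu=\lambda_i$, $v=e_i$ is an isomorphism precisely because $\lambda_i$ is simple. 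Put $\nu(H)=\mu(H)-\lambda_i$.

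The next step is to extract $\nu$ and $w$ from the eigenvalue equation. Substituting $v=e_i+w$, $\mu=\lambda_i+\nu$ into $(W+H)v=\mu v$ and using $We_i=\lambda_ie_i$ gives
$$(W-\lambda_iI)w+He_i+Hw=\nu e_i+\nu w .$$
Taking the inner product with $e_i$, and using that the $i$-th diagonal entry of $W-\lambda_iI$ vanishes together with $\langle w,e_i\rangle=0$, one gets
$$\nu=h_{ii}+\langle Hw,e_i\rangle=h_{ii}+\sum_{j\neq i}h_{ij}w_j .$$
Taking instead the inner product with $e_j$, $j\neq i$, gives $(\lambda_j-\lambda_i-\nu)\,w_j=-h_{ji}-\langle Hw,e_j\rangle$, hence, since $\lambda_i\neq\lambda_j$, $w_j=\dfrac{h_{ji}}{\lambda_i-\lambda_j}+O(|H|^2)$ for $j\neq i$. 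Plugging this back into the formula for $\nu$, and noting $h_{ij}\cdot O(|H|^2)=O(|H|^3)$, yields the expansion
$$\lambda_i(W+H)=\lambda_i+h_{ii}+\sum_{j\neq i}\frac{h_{ij}h_{ji}}{\lambda_i-\lambda_j}+O(|H|^3) .$$

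Finally I would compare this with the generic Taylor expansion $\lambda_i(W+H)=\lambda_i+\sum_{k,l}\partial_{W_{kl}}\lambda_i\,h_{kl}+\tfrac12\sum_{k,l,p,q}\partial^2_{W_{kl}W_{pq}}\lambda_i\,h_{kl}h_{pq}+O(|H|^3)$. Matching the linear parts gives $\partial\lambda_i/\partial W_{ii}=1$ and $\partial\lambda_i/\partial W_{kl}=0$ for $(k,l)\neq(i,i)$; since a diagonal matrix has precisely these eigenvalues, the same holds with $\lambda_i$ replaced by any $\lambda_k$, which is the first assertion. Matching the quadratic parts: for fixed $j\neq i$ the coordinates $W_{ij}$ and $W_{ji}$ are distinct, the monomial $h_{ij}h_{ji}$ occurs in the expansion with coefficient $\tfrac{1}{\lambda_i-\lambda_j}$, and its coefficient on the Taylor side is $\partial^2_{W_{ij}W_{ji}}\lambda_i$; no other monomial $h_{kl}h_{pq}$ appears — in particular no $h_{ii}^2$ — so all remaining second derivatives, including $\partial^2\lambda_i/\partial W_{ii}^2$, vanish. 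This yields the last two assertions. The only delicate point is this last bookkeeping: one must keep $W_{ij}$ and $W_{ji}$ as separate variables so that the cross term $h_{ij}h_{ji}$ delivers the coefficient $\tfrac1{\lambda_i-\lambda_j}$ with no stray factor $2$, and one must check that the $O(|H|^2)$ correction to $w_j$ feeds only into the $O(|H|^3)$ remainder after being multiplied by $h_{ij}$.
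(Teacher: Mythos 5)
The paper does not prove this lemma at all: it is listed among ``well-known results'' with no argument attached (the analogous Lemma~\ref{Gao0} is handled by citation), so there is nothing in the text to compare your proof against. Your perturbation argument is the standard one found in references such as \cite{S05} or \cite{Ger06}: implicit function theorem for the simple eigenvalue, second-order expansion $\lambda_i(W+H)=\lambda_i+h_{ii}+\sum_{j\neq i}h_{ij}h_{ji}/(\lambda_i-\lambda_j)+O(|H|^3)$, then coefficient matching, and the algebra you carry out (the formulas for $\nu$ and $w_j$ and the bookkeeping that puts the $O(|H|^2)$ correction of $w_j$ into the cubic remainder) is correct.

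Two small points of hygiene. First, there is a tension between ``treating the $n^2$ entries $W_{kl}$ as independent coordinates'' and restricting to \emph{symmetric} perturbations $H$: on symmetric matrices $h_{ij}h_{ji}=h_{ij}^2$, so the quadratic coefficients cannot be read off unambiguously from symmetric directions alone, and the factor-of-$2$ issue you flag is exactly where this bites. Fortunately your derivation of $\nu$ and $w_j$ never uses symmetry of $H$, so you should simply drop the word ``symmetric'' and expand along arbitrary small $H$ (the linearization in the implicit function theorem step involves only $W$, $\lambda_i$, $e_i$, so it is unaffected); alternatively, interpret the second-derivative identities as the quadratic-form statement, which is how they enter the paper through \eqref{Pre-2}. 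Second, the ``limiting argument'' you invoke for the first-order identities at repeated eigenvalues is not needed and would be delicate to make precise (ordered eigenvalues are not differentiable there); as coordinate-wise partial derivatives the identities are immediate, since varying a single off-diagonal entry of a diagonal matrix leaves the characteristic polynomial unchanged, while varying a diagonal entry keeps the matrix diagonal.
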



\section{A concavity inequality for $(n-1)$-Hessian operator}

In this section, we prove the proof of the crucial  Lemma \ref{pro-55-33}. 
We need the following lemma which is a slight improvement of Lemma 3.1 in \cite{lu24}.

\begin{lemma}\label{pro-55}
Let $n\geq 3$, $F=\sigma_{n-1}$ and $\lambda=(\lambda_1, \cdots, \lambda_n)\in \Gamma_{n-1}$ with
$$\lambda_1=\cdots=\lambda_m>\lambda_{m+1} \geq \cdots \geq \lambda_n.$$
Then there exists  constant $A>1$ and small constant $\delta_0>0$ depending on $n, A$ and $\max F$ such that if $\lambda_n \leq -A$, then we have 
\begin{eqnarray*}
-\sum_{p\neq q} F^{pp, qq} \xi_{p} \xi_{q}+\frac{\left(\sum_i F^{ii} \xi_{i}\right)^2}{F}+2 \sum_{i>m} \frac{F^{ii} \xi_{i}^2}{\lambda_1-\lambda_i} \geq (1+\delta_0) \frac{ F^{11}\xi_{1}^2}{\lambda_1}
\end{eqnarray*}
for any $\xi=(\xi_1, \cdots, \xi_n)\in \mathbb{R}^n$ with $\xi_i=0$ for $1<i\leq m$.
\end{lemma}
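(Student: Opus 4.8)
The plan is to follow the Lu--Tsai strategy for the non-semi-convex regime $\lambda_n \le -A$, keeping careful track of constants so that the coefficient of $F^{11}\xi_1^2/\lambda_1$ ends up as $1+\delta_0$ rather than just $1$. First I would record the relevant second-order derivative identities for $F=\sigma_{n-1}$. Since $\lambda \in \Gamma_{n-1}$ and $\lambda_n \le -A < 0$, by Proposition~\ref{Nm-11}(\romannumeral6) there is exactly one negative entry, namely $\lambda_n$, all others are positive, and $|\lambda_n| \le (n-1)\lambda_{n-1}$ while $\lambda_1 = \cdots = \lambda_m$ dominates. I would write out $F^{ii} = \sigma_{n-2}(\lambda|i)$ and $F^{ii,jj} = \sigma_{n-3}(\lambda|ij)$, and note the positivity/ordering $F^{11} \le F^{22} \le \cdots \le F^{nn}$ from Lemma~\ref{Gao0}. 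Because $\xi_i = 0$ for $1 < i \le m$, the sums over the ``bad'' off-diagonal terms $-\sum_{p\ne q} F^{pp,qq}\xi_p\xi_q$ only involve the index $1$ paired with indices $>m$, plus pairs among indices $>m$; the key one is the term $-2\sum_{i>m} F^{11,ii}\xi_1\xi_i$, which has no definite sign and must be absorbed.

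The core of the argument is a Cauchy--Schwarz / completing-the-square manipulation. I would bound
\[
-2\sum_{i>m} F^{11,ii}\xi_1\xi_i \le \sum_{i>m} \frac{(F^{11,ii})^2}{a_i}\xi_1^2 + \sum_{i>m} a_i \xi_i^2
\]
for weights $a_i>0$ to be chosen, arranging that $\sum_{i>m} a_i\xi_i^2$ is dominated by the good term $2\sum_{i>m} F^{ii}\xi_i^2/(\lambda_1-\lambda_i)$ — here one uses that $\lambda_1-\lambda_i \ge \lambda_1 - \lambda_{m+1} > 0$ and, crucially for $i=n$, that $\lambda_1 - \lambda_n \ge \lambda_1 + A$ is large, so the coefficient $F^{nn}/(\lambda_1-\lambda_n)$ is controlled even though $F^{nn}$ is the largest of the $F^{ii}$. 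The surviving coefficient of $\xi_1^2$ must then be shown to be at most $(1+\delta_0)F^{11}/\lambda_1$ after also using the gradient-square term $(\sum_i F^{ii}\xi_i)^2/F \ge 0$ (or expanding it and keeping its cross terms with $\xi_1$). I expect this reduces to an algebraic inequality among $\sigma$'s of the form: for $\lambda_1$ large and $\lambda_n \le -A$,
\[
\frac{\sum_{i>m}(F^{11,ii})^2/a_i + (\text{diagonal piece } F^{11,11}\text{ if }m=1)}{\lambda_1}\ \le\ (1+\delta_0)\frac{F^{11}}{\lambda_1}\,,
\]
which after multiplying by $\lambda_1/F^{11} = \lambda_1/\sigma_{n-2}(\lambda|1)$ becomes a statement provable by Newton--MacLaurin (Proposition~\ref{NM}) and the estimates in Proposition~\ref{Nm-11}, with the slack producing the $\delta_0 = \delta_0(n,A,\max F)$.

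The main obstacle, and the place where the improvement over Lemma~3.1 of \cite{lu24} lives, is squeezing the coefficient below $1+\delta_0$ with $\delta_0>0$ rather than merely $\le 1$: one must not be wasteful in the Cauchy--Schwarz step, so I would optimize the weights $a_i$ (essentially taking $a_i$ proportional to $F^{11,ii}$ up to the cap imposed by the good term) and exploit that when $\lambda_n \le -A$ with $A$ large, several of the relevant ratios $\sigma_{n-3}(\lambda|1i)/\sigma_{n-2}(\lambda|1)$ are strictly bounded away from their worst-case values, giving a genuine gain. A secondary technical point is the case split on $m$: if $m=1$ there is an extra diagonal term $F^{11,11}\xi_1^2 = 0$ (since $\sigma_{n-3}(\lambda|11)$ multiplies $\xi_1^2$ only through $F^{11,11}$, which vanishes as $\partial^2\sigma_{n-1}/\partial\lambda_1^2 = 0$), which actually simplifies matters; if $m>1$ the constraint $\xi_i=0$ for $1<i\le m$ removes many terms, and one should check the counting of which $F^{pp,qq}$ with $p,q>m$ actually appear. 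Throughout, the hypothesis $\lambda_1 \ge C$ is used only to absorb lower-order ($O(1)$ relative to $\lambda_1$) error terms into the $\delta_0$ margin.
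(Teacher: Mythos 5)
Your proposal follows a genuinely different route from the paper. The paper does not run a Cauchy--Schwarz/Newton--MacLaurin argument at all; instead it exploits the exact algebraic identities special to $F=\sigma_{n-1}$, namely $F^{ii}=\frac{F}{\lambda_i}+\frac{\Lambda}{\lambda_i^2}$ and $F^{ii,jj}=\frac{F}{\lambda_i\lambda_j}+\frac{\Lambda(\lambda_i+\lambda_j)}{\lambda_i^2\lambda_j^2}$ with $\Lambda=-\sigma_n>0$. Plugging these in and collecting terms converts
\[
-\sum_{p\neq q}F^{pp,qq}\xi_p\xi_q+\frac{\bigl(\sum_iF^{ii}\xi_i\bigr)^2}{F}
\;=\;\frac{\Lambda^2}{F}\Bigl(\sum_i\frac{\xi_i}{\lambda_i^2}\Bigr)^2+F\sum_i\frac{\xi_i^2}{\lambda_i^2}+2\Lambda\sum_i\frac{\xi_i^2}{\lambda_i^3},
\]
and after absorbing the remaining pieces the whole left--minus--right is written as $\Lambda\cdot\eta^T(y^Ty+D)\eta$ for an explicit rank-one-plus-diagonal matrix; positive definiteness is checked via the matrix determinant lemma, $\det(y^Ty+D)=\det D\,(1+\frac{\Lambda}{F}\sum_i d_i^{-1})$. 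The $\delta_0>0$ slack comes directly from the quantitative estimate $\Lambda\ge A^{n-1}\lambda_1$, which makes $1+\frac{\Lambda}{F}\sum_i d_i^{-1}$ strictly negative for $A$ large. This is a closed-form computation, not a weight-optimization argument.

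There is a concrete gap in your sketch at the place you treat as a secondary detail. You write that the gradient-square term $\bigl(\sum_iF^{ii}\xi_i\bigr)^2/F$ can be used ``$\ge 0$ (or expanding it and keeping its cross terms with $\xi_1$).'' Dropping it is not an option: expanding $-\sum_{p\neq q}F^{pp,qq}\xi_p\xi_q$ alone produces the term $-F\bigl(\sum_p\xi_p/\lambda_p\bigr)^2$, which is negative and of the same order as everything else, and this is cancelled precisely by the $F\bigl(\sum_i\xi_i/\lambda_i\bigr)^2$ piece coming from $\bigl(\sum_iF^{ii}\xi_i\bigr)^2/F$. Moreover the rank-one piece $\frac{\Lambda^2}{F}\bigl(\sum_i\xi_i/\lambda_i^2\bigr)^2$ is also indispensable, since in the paper's decomposition the diagonal $D$ has $\det D<0$ (the last entry $d_{n-m+1}=\frac{2\lambda_1\lambda_n}{\lambda_1-\lambda_n}<0$), so $\eta^TD\eta$ alone is indefinite. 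Your Cauchy--Schwarz absorption of the $\xi_1\xi_i$ cross-terms is not wrong in spirit, but without carrying the gradient-square term through the computation the coefficient of $\xi_1^2$ you would obtain cannot be bounded by $(1+\delta_0)F^{11}/\lambda_1$; the claimed ``slack from Newton--MacLaurin and the ratios $\sigma_{n-3}(\lambda|1i)/\sigma_{n-2}(\lambda|1)$ being away from worst case'' is not substantiated and in particular does not isolate where the hypothesis $\lambda_n\le-A$ (equivalently $\Lambda\gtrsim A^{n-1}\lambda_1$) enters, which is the entire source of the $\delta_0$ margin. Two smaller points: the bound from Proposition~\ref{Nm-11}(vi) is $|\lambda_n|\le n\lambda_{n-1}$, not $(n-1)\lambda_{n-1}$; and the lemma under discussion has no hypothesis $\lambda_1\ge C$ --- that appears only in the merged Lemma~\ref{pro-55-33} --- so attributing the $O(1)$ absorption to it is a misreading (here $\lambda_1>A$ is forced automatically by $\lambda_n\le-A$ and $\lambda\in\Gamma_{n-1}$).
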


\begin{proof}
Denote $\Lambda=-\sigma_n$, then we have
\begin{eqnarray}\label{pro-5552-3}
\Lambda =\lambda_1\cdots\lambda_{n-1}\cdot (-\lambda_n)\geq A^{n-1} \lambda_1
\end{eqnarray}
since $\lambda_i+\lambda_n>0$ for $i=1\cdots n-1$.
By Proposition \ref{sigma} and the definition of $\Lambda$, it is esay to show that
$$F^{ii}=\frac{F}{\lambda_{i}} +\frac{\Lambda}{\lambda_i^2},$$
and
$$F^{ii, jj}=\frac{F}{\lambda_{i}\lambda_{j}} +\frac{\Lambda(\lambda_i+\lambda_j)}{\lambda_i^2\lambda_j^2} \quad \mbox{for}~i\neq j.$$
Hence, we have
\begin{eqnarray}\label{pro-55-3}
&&-\sum_{p\neq q} F^{pp, qq}\xi_{p}\xi_{q} +\frac{\left(\sum_i F^{ii} \xi_{i}\right)^2}{F}\nonumber\\
&=&-\sum_{p\neq q} \left(\frac{F}{\lambda_{p}\lambda_{q}} +\frac{\Lambda(\lambda_p+\lambda_q)}{\lambda_p^2\lambda_q^2}\right) \xi_p \xi_q+\sum_{p\neq q} \left(\frac{F}{\lambda_{p}} +\frac{\Lambda}{\lambda_p^2}\right)  \left(\frac{F}{\lambda_{q}} +\frac{\Lambda}{\lambda_q^2}\right) \frac{\xi_p \xi_q}{F}\nonumber\\
&&+ \sum_i \left(\frac{F}{\lambda_{i}} +\frac{\Lambda}{\lambda_i^2}\right)^2 \frac{\xi_i^2}{F}\nonumber\\
&=&\frac{\Lambda^2}{F} \sum_{p,q}  \frac{\xi_p\xi_q}{\lambda_p^2\lambda_q^2}+F\sum_i \frac{\xi_i^2}{\lambda_i^2}+2\Lambda \sum_i \frac{\xi_i^2}{\lambda_i^3},
\end{eqnarray}
and
\begin{eqnarray}\label{pro-5555-3}
&&2 \sum_{i>m} \frac{F^{ii} \xi_{i}^2}{\lambda_1-\lambda_i} - (1+\delta_0) \frac{ F^{11}\xi_{1}^2}{\lambda_1}\\
&=& F \left(2\sum_{i>m} \frac{ \xi_{i}^2}{(\lambda_1-\lambda_i)\lambda_i}- (1+\delta_0) \frac{\xi_1^2}{\lambda_1^2}\right) +\Lambda \left( 2\sum_{i>m} \frac{ \xi_{i}^2}{(\lambda_1-\lambda_i)\lambda_i^2}- (1+\delta_0) \frac{\xi_1^2}{\lambda_1^3}\right).\nonumber
\end{eqnarray}
Combining with \eqref{pro-5552-3},  \eqref{pro-55-3}, \eqref{pro-5555-3} and the assumption $\xi_i=0$ for $1<i\leq m$, we have 
\begin{eqnarray*}
&&-\sum_{p\neq q} F^{pp, qq} \xi_{p} \xi_{q}+\frac{\left(\sum_i F^{ii} \xi_{i}\right)^2}{F}+2 \sum_{i>m} \frac{F^{ii} \xi_{i}^2}{\lambda_1-\lambda_i} - (1+\delta_0) \frac{ F^{11}\xi_{1}^2}{\lambda_1}\\
&=& F \left(\sum_i \frac{\xi_i^2}{\lambda_i^2}+2\sum_{i>m} \frac{ \xi_{i}^2}{(\lambda_1-\lambda_i)\lambda_i}- (1+\delta_0) \frac{\xi_1^2}{\lambda_1^2}\right)\\
&&+ \Lambda \left(\frac{\Lambda}{F} \sum_{p,q}  \frac{\xi_p\xi_q}{\lambda_p^2\lambda_q^2}+ \sum_i \frac{2\xi_i^2}{\lambda_i^3}+ 2\sum_{i>m} \frac{ \xi_{i}^2}{(\lambda_1-\lambda_i)\lambda_i^2}- (1+\delta_0) \frac{\xi_1^2}{\lambda_1^3}\right)\\
&\geq& \Lambda \left(\frac{\Lambda}{F} \sum_{p,q }  \frac{\xi_p\xi_q}{\lambda_p^2\lambda_q^2}+ \sum_i \frac{2\xi_i^2}{\lambda_i^3}+ 2\sum_{i>m} \frac{ \xi_{i}^2}{(\lambda_1-\lambda_i)\lambda_i^2}- (1+\delta_0) \frac{\xi_1^2}{\lambda_1^3}-\delta_0 \frac{F\xi_1^2}{\Lambda\lambda_1^2}\right)\\
&\geq& \Lambda \left(\frac{\Lambda}{F} \sum_{p,q \in\{1, m+1, \cdots, n\}}   \frac{\xi_p\xi_q}{\lambda_p^2\lambda_q^2}+ \sum_{i>m}  \frac{ 2\xi_{i}^2 \lambda_1}{(\lambda_1-\lambda_i)\lambda_i^3}  +(1-\delta_0-\frac{\delta_0F}{A^{n-1}}) \frac{\xi_1^2}{\lambda_1^3}\right)\\
&=&  \Lambda \cdot \eta (y^Ty+D)\eta^T,
\end{eqnarray*}
where $y=\sqrt{\frac{\Lambda}{F}} (1,1, \cdots, 1) \in \mathbb{R}^{n-m+1}$,
$\eta=(\eta_1\cdots, \eta_n) \in \mathbb{R}^{n-m+1}$ with 
$$\eta_1=\frac{\xi_1}{\lambda_1^2}, \quad \eta_i=\frac{\xi_{m+i-1}}{\lambda_{m+i-1}^2} ~\mbox{for}~i=2,\cdots n-m+1,$$
 and $D=\mbox{diag}(d_1, \cdots, d_{n-m+1})$ is a $(n-m+1)\times (n-m+1)$ diagonal matrix  with
$$d_i=\begin{cases} \left(1-\delta_0-\frac{\delta_0F}{A^{n-1}}\right)\lambda_1, \quad i=1;\\
\frac{2 \lambda_1\lambda_{m+i-1}}{(\lambda_1-\lambda_{m+i-1})}, \quad \quad \quad\quad 2\leq i\leq n-m+1.
\end{cases}$$
We shall have established the lemma if we prove that $y^Ty+D$ is a positive definite. Assuming that $A> (3 \max F+1)^{\frac{1}{n-1}}$, then we can choose  $\delta_0$ sufficiently small such that  $\delta_0+\frac{\delta_0 \max F}{A^{n-1}}<\frac{1}{4}$. It implies that the 1st, 2nd,  $\cdots, (n-m)$th  order leading principle minor of $y^Ty+D$ is greater than $0$ since $d_i>0$ for $1\leq i\leq n-m$. Hence we only need to show $\det (y^Ty+D)>0$. In fact
\begin{eqnarray}\label{pro-55-1314}
\det (y^Ty+D) =\det D\cdot(1+ yD^{-1}y^T)= \det D \left(1+ \frac{\Lambda}{F} \sum_i \frac{1}{d_i}\right).
\end{eqnarray}
On the one hand $\det D<0$. On the other hand,
\begin{eqnarray}\label{pro-55443-1}
1+ \frac{\Lambda}{F} \sum_i \frac{1}{d_i}& = &1+  \frac{\Lambda}{F} \left(\frac{1}{ \left(1-\delta_0-\frac{\delta_0F}{A^{n-1}}\right)\lambda_1}+\sum_{i> m} \frac{\lambda_1-\lambda_i}{2\lambda_1\lambda_i}\right)\\
&\leq& 1+  \frac{\Lambda}{F} \left(\frac{4}{3} \frac{1}{\lambda_1}-\frac{n}{2}\frac{1}{\lambda_1} +\sum_{i} \frac{1}{2\lambda_i}\right)\nonumber\\
&=&1+\left(\frac{4}{3}-\frac{n}{2}\right) \frac{\Lambda}{F\lambda_1}+\frac{\Lambda F}{2F \sigma_n} \nonumber\\
&\leq&\frac{1}{2}-\frac{1}{6}\frac{\Lambda}{F\lambda_1}\nonumber\\
&\leq& \frac{1}{2}-\frac{1}{6}\frac{A^{n-1}}{\max F}\nonumber\\
&<& 0.\nonumber
\end{eqnarray}
We have used the assumption that  $\delta_0+\frac{\delta_0 \max F}{A^{n-1}}<\frac{1}{4}$ and $A> (3 \max F+1)^{\frac{1}{n-1}}$ in the second and last line.
Thus $\det (y^Ty+D)>0$ and hence the lemma is proved.
\end{proof}

\begin{lemma}\label{pro-55-1}
Let $n\geq 3$, $\lambda=(\lambda_1, \cdots, \lambda_n)\in \Gamma_{k}$ with
$$\lambda_1=\cdots=\lambda_m>\lambda_{m+1} \geq \cdots \geq \lambda_n \geq -A$$
for a constant $A>0$. 
Then there exists  constant $C>1$ depending on $n, k, \sigma_k, A$ and small constant $\delta_0>0$  depending on $k$ such that if $\lambda_1 \geq C$, then we have 
\begin{eqnarray}\label{pro-55443-2}
-\sum_{p\neq q} \sigma_k^{pp, qq} \xi_{p} \xi_{q}+ K\frac{\left(\sum_i \sigma_k^{ii} \xi_{i}\right)^2}{\sigma_k}+2 \sum_{i>m} \frac{\sigma_k^{ii} \xi_{i}^2}{\lambda_1-\lambda_i} \geq (1+\delta_0) \frac{ \sigma_k^{11}\xi_{1}^2}{\lambda_1}
\end{eqnarray}
for some sufficient large $K>0$ (depending on $\delta_0$), where $\xi=(\xi_1, \cdots, \xi_n)\in \mathbb{R}^n$ with $\xi_i=0$ for $1<i \leq m$.
\end{lemma}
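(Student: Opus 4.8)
The plan is to refine the concavity inequality of Zhang \cite{Zhang24}, keeping careful track of the constants so as to reach the sharp coefficient $2$ in the third sum. Write $F=\sigma_k$, $F^{ii}=\sigma_{k-1}(\lambda|i)$ and $F^{ii,jj}=\sigma_{k-2}(\lambda|ij)\geq 0$ for $i\neq j$; since $\lambda_1=\cdots=\lambda_m$ one has $F^{11}=\cdots=F^{mm}=\sigma_{k-1}(\lambda|1)$, and since $\xi_i=0$ for $1<i\leq m$ every sum in \eqref{pro-55443-2} runs over $\{1\}\cup\{m+1,\dots,n\}$. The first step is to record two elementary facts. By Proposition \ref{sigma}(2), $F^{ii}-F^{11}=(\lambda_1-\lambda_i)F^{11,ii}$ for each $i$, which gives the exact splitting
\begin{equation*}
2\sum_{i>m}\frac{F^{ii}\xi_i^2}{\lambda_1-\lambda_i}=2\sum_{i>m}\frac{F^{11}\xi_i^2}{\lambda_1-\lambda_i}+2\sum_{i>m}F^{11,ii}\xi_i^2 ,
\end{equation*}
with both summands nonnegative, together with the expansion
\begin{equation*}
-\sum_{p\neq q}F^{pp,qq}\xi_p\xi_q=-2\xi_1\sum_{i>m}F^{11,ii}\xi_i-\sum_{\substack{i\neq j\\ i,j>m}}F^{ii,jj}\xi_i\xi_j .
\end{equation*}

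I would then split into two cases according to the size of $G:=\sum_i F^{ii}\xi_i=F^{11}\xi_1+\sum_{i>m}F^{ii}\xi_i$. In the first case, $|G|\geq\frac{1}{2}|F^{11}\xi_1|$: the concavity of $\sigma_k^{1/k}$ (Proposition \ref{sigma}(5)) gives $-\sum_{p\neq q}F^{pp,qq}\xi_p\xi_q\geq-\frac{k-1}{k}G^2/F$, so the left side of \eqref{pro-55443-2} is at least $\bigl(K-\frac{k-1}{k}\bigr)G^2/F\geq\frac14\bigl(K-\frac{k-1}{k}\bigr)(F^{11})^2\xi_1^2/F$; since $\lambda_1 F^{11}=\lambda_1\sigma_{k-1}(\lambda|1)\geq c(n,k)\,\sigma_k$ by Proposition \ref{Nm-11}(iv), this dominates $(1+\delta_0)F^{11}\xi_1^2/\lambda_1$ once $K$ is chosen large depending on $n,k$. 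In the second case, $|G|<\frac{1}{2}|F^{11}\xi_1|$, so $\sum_{i>m}F^{ii}\xi_i$ has the opposite sign to, and magnitude comparable with, $F^{11}\xi_1$; discarding the nonnegative terms $K G^2/F$ and $2\sum_{i>m}F^{11}\xi_i^2/(\lambda_1-\lambda_i)$, the inequality \eqref{pro-55443-2} reduces, by the two displays above, to
\begin{equation*}
-2\xi_1\sum_{i>m}F^{11,ii}\xi_i-\sum_{\substack{i\neq j\\ i,j>m}}F^{ii,jj}\xi_i\xi_j+2\sum_{i>m}F^{11,ii}\xi_i^2\ \geq\ (1+\delta_0)\frac{F^{11}\xi_1^2}{\lambda_1}.
\end{equation*}
This is a sharpened form of the concavity of $\sigma_k^{1/k}$ restricted to the block $\{m+1,\dots,n\}$, with $\xi_1$ as a parameter. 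I would prove it with the generalized Newton--MacLaurin inequalities (Propositions \ref{Nm-11} and \ref{NM}): in this regime the cross term $-2\xi_1\sum_{i>m}F^{11,ii}\xi_i$ is favorable and, together with $-\sum_{i\neq j}F^{ii,jj}\xi_i\xi_j$, already beats $(1+\delta_0)F^{11}\xi_1^2/\lambda_1$; testing against the extremal configurations — a large multiplicity $m$ on one hand, $m=1$ with the remaining eigenvalues clustered near $0$ on the other — shows that the coefficient $2$ is exactly what makes this work and forces $\delta_0\leq\frac{1}{k-1}$, which is precisely the point where the constant improves on Zhang's.

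Throughout, the hypotheses $\lambda_n\geq-A$ and $\lambda_1\geq C$ enter only to discard error terms: once $C$ is large enough (depending on $n,k,\sigma_k,A$) they force $m\leq k-1$ and a clean splitting of the eigenvalues into a block comparable to $\lambda_1$ and a bounded block, so that subprincipal quantities such as $\sigma_k(\lambda|1)$ and the contributions of the bounded (possibly negative) small eigenvalues are negligible against the main terms; in contrast with the non-semiconvex situation of Lemma \ref{pro-55}, no $\sigma_n$-type correction is needed here. I expect the main obstacle to be precisely this last step, namely establishing the displayed inequality of the second case \emph{uniformly} in $\lambda$ (within the semi-convex, $\lambda_1$-large regime) and in $\xi$ with the sharp pair of constants $(2,\delta_0(k))$: $\sigma_k$ itself is not concave and the concavity of $\sigma_k^{1/k}$ is saturated along the ray $\xi=\lambda$, so the surplus has to be extracted simultaneously from the third term and from the off-diagonal second derivatives of $\sigma_k$, and the bookkeeping that delivers the optimal constants is delicate.
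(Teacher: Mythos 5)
Your Case 1 is fine: the Cauchy--Schwarz/concavity bound $-\sum_{p\neq q}F^{pp,qq}\xi_p\xi_q\geq-\tfrac{k-1}{k}G^2/F$ combined with $\lambda_1F^{11}\geq c(n,k)F$ does give the desired term once $K$ is large, and your preliminary identities (the splitting of $2\sum_{i>m}F^{ii}\xi_i^2/(\lambda_1-\lambda_i)$ via $F^{ii}-F^{11}=(\lambda_1-\lambda_i)F^{11,ii}$, and the expansion of $-\sum_{p\neq q}F^{pp,qq}\xi_p\xi_q$) are correct. But Case 2 is where the lemma actually lives, and there you \emph{assert} rather than prove; worse, the reduced inequality you write down after discarding $KG^2/F$ is simply false with your threshold $\tfrac12$. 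Take $n=4$, $k=3$, $\lambda=(L,1,1,1)$, $\xi=(1,t,t,t)$ with $t=-\tfrac{1}{2(2L+1)}$. Then $F^{11}=3$, $F^{22}=F^{33}=F^{44}=2L+1$, $F^{11,ii}=2$, $F^{ii,jj}=L+1$ for $i\neq j>1$, and $G=3+(2L+1)\cdot3t=\tfrac32=\tfrac12F^{11}\xi_1$, so this $\xi$ sits on the boundary of your Case 2. Your reduced left-hand side equals
\begin{equation*}
-12t-(6L-6)t^2\;=\;\frac{42L+30}{4(2L+1)^2}\;\longrightarrow\;\frac{21}{8L}\quad(L\to\infty),
\end{equation*}
while the right-hand side is $(1+\delta_0)\tfrac{3}{L}$, and $\tfrac{21}{8}<3$: the ratio tends to $\tfrac{7}{8(1+\delta_0)}<1$, so the claimed inequality fails in an open neighbourhood of this $(\lambda,\xi)$. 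The culprit is exactly the step you flagged as "the main obstacle": $KG^2/F$ cannot be dropped in Case 2, because near the Case-2 boundary $G^2/F$ is comparable to $F^{11}\xi_1^2/\lambda_1$ (here $G^2/F\approx\tfrac{3\xi_1^2}{4L}$), and it is precisely what compensates the shortfall. Shrinking the threshold postpones but does not remove the difficulty, and you would still need to control the cross block $-\sum_{i\neq j>m}F^{ii,jj}\xi_i\xi_j$ whose entries $\sigma_{k-2}(\lambda|ij)$ are of size $\lambda_1$ for $k<n-1$.

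For comparison, the paper does not attempt a direct block analysis of $\sigma_k$ at all. It imports Zhang's concavity machinery for the quotient $q_k=\sigma_k/\sigma_{k-1}$ (his (3.2), (3.25), (3.26)), which packages $-\sum F^{pp,qq}\xi_p\xi_q+KG^2/F$ and the third term together into $-\partial_\xi^2 q_k/q_k$ plus favourable remainders, keeping $KG^2/F$ in play the whole time. The case split is then on the sign and size of $\sigma_k(\lambda|1)$ (equivalently, how far $F^{11}$ sits from $F/\lambda_1$), not on $|G|$. That route avoids the delicate bookkeeping you ran into; your outline would need a genuinely new argument for the near-boundary regime of your Case 2 before it could close.
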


\begin{proof}
The proof is similar to Lemma 1.1 in \cite{Zhang24}, we mainly outline the changes. By (3.2), (3.25) and (3.26) in \cite{Zhang24}, we have  for $K\geq(k+1)^2$,
\begin{eqnarray*}
&&-\sum_{p\neq q} \frac{\sigma_k^{pp, qq} \xi_{p} \xi_{q}}{\sigma_k}+ K\frac{\left(\sum_i \sigma_k^{ii} \xi_{i}\right)^2}{\sigma_k^2}+2 \sum_{i>m} \frac{\sigma_k^{ii} \xi_{i}^2}{(\lambda_1-\lambda_i) \sigma_k} \nonumber\\
&\geq& -\frac{\partial_{\xi}^2 q_k}{ q_k}+ \left(1+\frac{(1-\epsilon)}{k}-\epsilon\right) \left(1-\frac{C(A)}{\lambda_1}\right)^2 \frac{\xi^2_1}{\lambda_1^2}+ \left(2- \frac{C(A)}{\lambda_1}- \frac{C_1}{\epsilon \lambda^{\frac{1}{k-1}}_1}\right) \sum_{i>m} \frac{\sigma_k^{ii}\xi_i^2}{\lambda_1 \sigma_k}, \nonumber
\end{eqnarray*}
where $q_k:= \frac{\sigma_k}{\sigma_{k-1}}$, $C(A)$ and $C$ are constants. We choose $\epsilon=\frac{1}{(k+1)^2}$ and assume that 
$$\lambda_1 >\max \left\{ \frac{C(A)}{1-\sqrt{\frac{k+2}{k+3}}}, (1+\frac{C(A)}{C_1})^2 \right\}.$$
 Then the above inequality  becomes
\begin{eqnarray}\label{pro-55443-3}
&&-\sum_{p\neq q} \frac{\sigma_k^{pp, qq} \xi_{p} \xi_{q}}{\sigma_k}+ K\frac{\left(\sum_i \sigma_k^{ii} \xi_{i}\right)^2}{\sigma_k^2}+2 \sum_{i>m} \frac{\sigma_k^{ii} \xi_{i}^2}{(\lambda_1-\lambda_i) \sigma_k}\\
&\geq& -\frac{\partial_{\xi}^2 q_k}{ q_k}+\frac{(k+2)^2}{(k+1)(k+3)}  \frac{\xi^2_1}{\lambda_1^2}+ \left(2- 2(k+1)^2\frac{C_1}{ \lambda^{\frac{1}{k-1}}_1}\right) \sum_{i>m} \frac{\sigma_K^{ii}\xi^2}{\lambda_1 \sigma_k}, \nonumber
\end{eqnarray}
If
$$\sigma_k(\lambda|1)\geq -\frac{\sigma_k}{2(k+2)^2-1},$$
the inequality \eqref{pro-55443-2} holds  by choosing $\delta_0=\frac{1}{2(k+1)(k+3)}$ and assuming $\lambda_1 \geq (C_1(k+1)^2)^{k-1}$.

If 
$$\sigma_k(\lambda|1)\leq -\frac{\sigma_k}{2(k+2)^2-1},$$
analysis similar to  the proof of Lemma 1.1 in \cite{Zhang24} shows that
\begin{eqnarray*}
&&-\sum_{p\neq q} \frac{\sigma_k^{pp, qq} \xi_{p} \xi_{q}}{\sigma_k}+ K\frac{\left(\sum_i \sigma_k^{ii} \xi_{i}\right)^2}{\sigma_k^2}+2 \sum_{i>m} \frac{\sigma_k^{ii} \xi_{i}^2}{(\lambda_1-\lambda_i) \sigma_k}\geq (1+\delta_0)\frac{\sigma_k^{11}\xi^2_1}{\sigma_k \lambda_1^2}.
\end{eqnarray*}
where $\delta_0=\min \left\{\frac{1}{15}, \frac{1}{(k+1)(k+3)} \right\}$.
Hence the proof is completed. 
\end{proof}

\begin{proof}[\textbf{Proof of Lemma \ref{pro-55-33}}]
Combining with Lemma \ref{pro-55} and \ref{pro-55-1}, we obtain the
 Lemma \ref{pro-55-33}.
\end{proof}


\section{Pogorelov type $C^2$ estimates for $(n-1)$-equations}

In this section, we will use the idea in \cite{Chu20, Chen20} to
give the proof of Theorem \ref{main-2}. For convenience, we introduce
the following notations
\begin{eqnarray*}
F(\nabla^2u)=\sigma_{n-1}(\nabla^2u),
\quad F^{ij}=\frac{\partial F}{\partial u_{ij}}, \quad F^{ij, r
s}=\frac{\partial^2 F}{\partial u_{ij}\partial u_{rs}}.
\end{eqnarray*}
We assume $u\in C^4(\Omega)\cap C^2(\overline{\Omega})$ is a $(n-1)$-convex
solution of the Hessian equation \eqref{Eq-n-1} with $\lambda(\nabla^2u)\in \Gamma_{n-1}$.

\begin{proof}
Note that we can assume $u<0$ in $\Omega$ by the maximum principle. We consider the following test function,
\begin{equation*}
P(x)=\ln
\lambda_{1}+ \beta \ln (-u)+\frac{B}{2}|\nabla u|^2,
\end{equation*}
where $\lambda_{1}$ is the biggest eigenvalue of the Hessian
matrix $\nabla^2 u$,  $\beta$ and $B$ are constants which will be
determined later. Suppose $P$ attains its maximum
value in $\Omega$ at $x_0$. Rotating the coordinates, we diagonal
the matrix $\nabla^2 u=(u_{ij})$. Without loss of generality, we may assume $\lambda_1(x_0)$ has multiplicity $m$, then
\begin{eqnarray*}
u_{ij}=u_{ii}\delta_{ij}, \quad \lambda_i=u_{ii}, \quad \lambda_{1}=\lambda_2=\cdots=\lambda_m\geq
\lambda_{m+1}\geq \cdots \geq \lambda_{n}\quad \mbox{at}~x_0.
\end{eqnarray*}
Then, by \eqref{Pre-3} we obtain
\begin{eqnarray}\label{F-or}
0<F^{11}= F^{22}=\cdots =F^{mm}\leq F^{m+1, m+1}\leq \cdots \leq F^{nn} \quad \mbox{at}~x_0.
\end{eqnarray}
By Lemma 5 in \cite{BCD-17}, we have 
\begin{equation}\label{90-9873}
\delta_{kl} \cdot \lambda_{1,i}= u_{kli}~\quad~\mbox{for}~1\leq k, l\leq m,
\end{equation}
and
\begin{equation}\label{90-9874}
\lambda_{1,ii} \geq u_{11ii}+2 \sum_{p>m} \frac{u_{1pi}^2}{\lambda_1-\lambda_p}
\end{equation}
in the viscosity sense.

In the following, we will do a standard computation at $x_0$.
Note that in the following estimates the letter ``$C$" denotes a generic constant
which is allowed to depend only on the known data of the problem, i.e.
$n, k,  \sup_{\Omega}|u|$, $\sup_{\Omega}|\nabla u|, |\psi|_{C^2}, \inf \psi$  and which may change from line to line.

 Differentiating $P$ at
$x_0$ twice, we get
\begin{equation}\label{102803}
\frac{\lambda_{1,i}}{\lambda_1}+\frac{\beta u_i}{u} +Bu_i u_{ii}=0,
\end{equation}
and
\begin{equation}\label{102804}
\frac{\beta u_{ii}}{u} - \frac{\beta u_i^2}{u^2} +
\frac{\lambda_{1, ii}}{\lambda_1} -\frac{\lambda_{1,i}^2}{\lambda_{1}^2} +B
\sum_{j=1}^{n} u_j u_{jii} + B u^2_{ii}\leq 0.
\end{equation}
According to \eqref{90-9873} and \eqref{90-9874}, we have at $x_0$
\begin{eqnarray}\label{TPP-0}
\frac{\lambda_{1,i}}{\lambda_1}=\frac{u_{11i}}{\lambda_1}, \quad \frac{\lambda_{1, ii}}{\lambda_1}\geq \frac{u_{11ii}}{\lambda_1}+2 \sum_{p>m} \frac{u_{1pi}^2}{\lambda_1(\lambda_1-\lambda_{p})}.
\end{eqnarray}
Thus, at $x_0$
\begin{eqnarray}\label{TP}
0&\geq& F^{ii} P_{ii}\nonumber\\
&\geq&\frac{ (n-1)\beta\psi}{u} -
\frac{\beta F^{ii}u_i^2}{u^2} + \frac{F^{ii}u_{11ii}}{\lambda_1}+2 \sum_{p>m} \frac{F^{ii}u_{1pi}^2}{\lambda_1(\lambda_1-\lambda_{p})}
-\frac{F^{ii}u_{11i}^2}{\lambda_{1}^2}\nonumber\\&&+B\sum_{j=1}^{n}  u_j
F^{ii}u_{iij} + B F^{ii}u^2_{ii}\nonumber\\
&\geq& \frac{F^{ii}u_{11ii}}{\lambda_1}+2 \sum_{p>m} \frac{F^{ii}u_{1pi}^2}{\lambda_1(\lambda_1-\lambda_{p})}+B\sum_{j=1}^{n}  u_j
F^{ii}u_{iij}+ BF^{ii}u^2_{ii}+\frac{ (n-1)\beta\psi}{u} \nonumber\\
&&-(1+\frac{2}{\beta})\frac{F^{ii}u_{11i}^2}{\lambda_{1}^2} - \frac{2B^2}{\beta}F^{ii}u_i^2u^2_{ii}.
\end{eqnarray}

 Rewrite equation \eqref{Eq} as
\begin{equation}\label{Eq-N}
F(\nabla^2 u)=\psi,
\end{equation}
differentiating equation \eqref{Eq-N} once gives
\begin{equation}\label{2}
F^{ii} u_{iij}=\psi_j=\psi_{x_j}+\psi_z u_j+\frac{\partial \psi}{\partial u_{j}} u_{jj}\leq C(1+\lambda_1).
\end{equation}
Differentiating equation \eqref{Eq-N} twice gives
\begin{equation}\label{102802}
F^{ij,rs}u_{ij1}u_{rs1}+F^{ii}u_{ii11}=\psi_{11}.
\end{equation}
Now we estimate $\psi_{11}$
\begin{eqnarray*}
\psi_{11}&\geq& -C(1+\lambda_1+\lambda^{2}_{1})+
\frac{\partial \psi}{\partial u_{i}}u_{11i}.
\end{eqnarray*}
Then, applying Lemma \ref{Gao0}, it follows at $x_0$
\begin{eqnarray}\label{3}
\frac{F^{ii}u_{ii 11}}{\lambda_1}&\geq&- \frac{1}{\lambda_1}F^{ij,rs}u_{ij1}u_{rs1}-C(1+\lambda_1)+
\frac{\partial \psi}{\partial u_{i}}\frac{u_{11i}}{\lambda_1}\nonumber\\&=&
- \frac{1}{\lambda_1}\sum_{p \neq q}F^{pp,qq}u_{pp1}u_{qq1}- \frac{2}{\lambda_1}\sum_{p<q} \frac{F^{pp}-F^{qq}}{\lambda_p-\lambda_q}  u_{pq1}^2-C(1+\lambda_1)+
\frac{\partial \psi}{\partial u_{i}}\frac{u_{11i}}{\lambda_1}\nonumber\\
&\geq&- \frac{1}{\lambda_1}\sum_{p \neq q}F^{pp,qq}u_{pp1}u_{qq1}+\sum_{i>m} \frac{2(F^{ii}-F^{11})u_{11i}^2}{(\lambda_1-\lambda_i)\lambda_1} -C(1+\lambda_1)+
\frac{\partial \psi}{\partial u_{i}}\frac{u_{11i}}{\lambda_1}.
\end{eqnarray}
Using \eqref{2} and \eqref{102803}, we have,
\begin{eqnarray}\label{3-2-2}
\frac{\partial \psi}{\partial u_{i}}\frac{u_{11i}}{\lambda_1}+B\sum_{j=1}^{n}  u_j
F^{ii}u_{iij} \geq -C(B+\frac{\beta}{-u}).
\end{eqnarray}

Plugging \eqref{2}, \eqref{3} and \eqref{3-2-2} into \eqref{TP},
assuming $u_{11}(x_0)\geq 1$, we have at $x_0$,
\begin{eqnarray}\label{TP-1}
0&\geq&- \frac{1}{\lambda_1}\sum_{p \neq q}F^{pp,qq}u_{pp1}u_{qq1}+2 \sum_i\sum_{p>m} \frac{F^{ii}u_{1pi}^2}{\lambda_1(\lambda_1-\lambda_{p})}+\sum_{i>m} \frac{2(F^{ii}-F^{11})u_{11i}^2}{(\lambda_1-\lambda_i)\lambda_1} \nonumber\\
&&-(1+\frac{2}{\beta})\frac{F^{ii}u_{11i}^2}{\lambda_{1}^2}+ (B-2\frac{B^2C}{\beta})
F^{ii}\lambda^2_{i}-C(B+\frac{\beta}{-u}+\lambda_{1})\nonumber\\
&\geq&- \frac{1}{\lambda_1}\sum_{p \neq q}F^{pp,qq}u_{pp1}u_{qq1}+2 \sum_{p>m} \frac{F^{pp}u_{1pp}^2}{\lambda_1(\lambda_1-\lambda_{p})}+2\sum_{p>m} \frac{F^{11}u_{11p}^2}{\lambda_1(\lambda_1-\lambda_{p})} \nonumber\\
&&+\sum_{p>m} \frac{2(F^{pp}-F^{11})u_{11p}^2}{(\lambda_1-\lambda_p)\lambda_1}
-(1+\frac{2}{\beta})\frac{F^{ii}u_{11i}^2}{\lambda_{1}^2}+ (B-\frac{2B^2C}{\beta})
F^{ii}\lambda^2_{i}\nonumber\\
&&-C(B+\frac{\beta}{-u}+\lambda_{1}),
\end{eqnarray}
since
\begin{eqnarray*}
2 \sum_i\sum_{p>m} \frac{F^{ii}u_{1pi}^2}{\lambda_1(\lambda_1-\lambda_{p})} &\geq&2 \sum_{p>m} \frac{F^{pp}u_{1pp}^2}{\lambda_1(\lambda_1-\lambda_{p})}+ 2\sum_{p>m} \frac{F^{11}u_{11p}^2}{\lambda_1(\lambda_1-\lambda_{p})}.
\end{eqnarray*}

 Then we obtain the following lemma:

\begin{lemma}\label{claim-21}
Assuming that $\beta>\frac{2n}{n-2}$, then we have at $x_0$
\begin{eqnarray*}
2\sum_{p>m} \frac{F^{11}u_{11p}^2}{\lambda_1(\lambda_1-\lambda_{p})}+\sum_{p>m} \frac{2(F^{pp}-F^{11})u_{11p}^2}{(\lambda_1-\lambda_p)\lambda_1}-(1+\frac{2}{\beta}) \sum_{p>1}\frac{F^{pp}u_{11p}^2}{\lambda_{1}^2}\geq 0.
\end{eqnarray*}
\end{lemma}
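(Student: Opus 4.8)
\textbf{Proof proposal for Lemma \ref{claim-21}.}

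The plan is to reduce the claimed inequality, which is a sum over the indices $p>m$, to a single-index estimate: it suffices to show that for each fixed $p>m$,
\begin{equation*}
\frac{2F^{11}}{\lambda_1(\lambda_1-\lambda_p)}+\frac{2(F^{pp}-F^{11})}{(\lambda_1-\lambda_p)\lambda_1}-\Big(1+\frac{2}{\beta}\Big)\frac{F^{pp}}{\lambda_1^2}\geq 0,
\end{equation*}
since then multiplying by $u_{11p}^2\geq 0$ and summing over $p>m$ gives exactly the asserted bound (note the first two sums in the Lemma range over $p>m$, while the last sum over $p>1$ contributes the same terms for $p>m$ plus nonnegative garbage for $1<p\le m$, which by $(\ref{90-9873})$ actually vanishes since $u_{11p}=\lambda_{1,1}\delta_{1p}=0$ for $1<p\le m$; in any case dropping those terms only weakens the left side). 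First I would combine the two terms with denominator $(\lambda_1-\lambda_p)\lambda_1$: their numerators add to $2F^{11}+2(F^{pp}-F^{11})=2F^{pp}$, so the bracketed quantity becomes
\begin{equation*}
\frac{2F^{pp}}{(\lambda_1-\lambda_p)\lambda_1}-\Big(1+\frac{2}{\beta}\Big)\frac{F^{pp}}{\lambda_1^2}=\frac{F^{pp}}{\lambda_1^2}\left(\frac{2\lambda_1}{\lambda_1-\lambda_p}-1-\frac{2}{\beta}\right).
\end{equation*}

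Since $F^{pp}>0$ and $\lambda_1^2>0$, the inequality is equivalent to showing $\frac{2\lambda_1}{\lambda_1-\lambda_p}\geq 1+\frac{2}{\beta}$ for every $p>m$. Here I would invoke Proposition \ref{Nm-11}(\romannumeral2) with $k=n-1$: if $\lambda_p\le 0$ then $-\lambda_p\le \frac{n-(n-1)}{n-1}\lambda_1=\frac{\lambda_1}{n-1}$, while if $\lambda_p>0$ then trivially $-\lambda_p<0\le\frac{\lambda_1}{n-1}$; in either case $\lambda_1-\lambda_p\le \lambda_1+\frac{\lambda_1}{n-1}=\frac{n}{n-1}\lambda_1$. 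Hence
\begin{equation*}
\frac{2\lambda_1}{\lambda_1-\lambda_p}\geq \frac{2\lambda_1}{\frac{n}{n-1}\lambda_1}=\frac{2(n-1)}{n}=2-\frac{2}{n}.
\end{equation*}
It therefore suffices to have $2-\frac{2}{n}\geq 1+\frac{2}{\beta}$, i.e. $1-\frac{2}{n}\geq\frac{2}{\beta}$, i.e. $\beta\geq\frac{2n}{n-2}$, which is exactly the hypothesis (with $\beta>\frac{2n}{n-2}$ giving a strict inequality). This closes the argument.

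The only subtle point, and the one I would be most careful about, is the sign of $\lambda_1-\lambda_p$ and the validity of the factorization: we need $\lambda_1-\lambda_p>0$ for $p>m$, which holds by the definition of $m$ as the multiplicity of the top eigenvalue (so $\lambda_1>\lambda_{m+1}\ge\cdots\ge\lambda_n$), and we need $\lambda_1>0$, which holds since $\lambda\in\Gamma_{n-1}\subset\Gamma_1$ forces $\sigma_1(\lambda)>0$ hence $\lambda_1>0$. With these in hand every denominator is positive and the chain of equivalences above is legitimate; no third-order quantities or delicate cancellations are involved, so this lemma is genuinely elementary once the reduction to a per-index estimate is made. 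The main obstacle is really just bookkeeping — making sure the mismatch between the index ranges $p>m$ and $p>1$ in the statement is handled correctly — rather than any analytic difficulty.
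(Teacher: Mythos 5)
Your proof is correct and takes essentially the same route as the paper: combine the first two sums so the coefficient of $u_{11p}^2$ becomes $\frac{2F^{pp}}{\lambda_1(\lambda_1-\lambda_p)}$, reduce to a per-index inequality, and close it using Proposition~\ref{Nm-11}(\romannumeral2) with $k=n-1$ (the paper just packages the resulting bound in the equivalent form $\frac{\beta-2}{\beta+2}\lambda_1+\lambda_p\geq 0$). One caution: your parenthetical claim that ``dropping those terms only weakens the left side'' is backwards --- the indices $1<p\le m$ sit inside the \emph{subtracted} sum, so including them would hurt, not help; the argument genuinely needs the vanishing $u_{11p}=0$ for $1<p\le m$ from \eqref{90-9873}, which you do also invoke, so the proof stands, but the fallback reasoning should be deleted.
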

\begin{proof}
By \eqref{90-9873}, we have 
\begin{equation}
u_{11i}=u_{1i1}=\delta_{1i}\cdot \lambda_{1,i}=1, \quad~\mbox{for}~1<i\leq m.
\end{equation}
By direct calculation, we obtain
\begin{eqnarray}
&&2\sum_{p>m} \frac{F^{11}u_{11p}^2}{\lambda_1(\lambda_1-\lambda_{p})}+\sum_{p>m} \frac{2(F^{pp}-F^{11})u_{11p}^2}{(\lambda_1-\lambda_p)\lambda_1}-(1+\frac{2}{\beta}) \sum_{p>1}\frac{F^{pp}u_{11p}^2}{\lambda_{1}^2}\nonumber\\
&\geq&\sum_{p>m}\frac{F^{pp}u_{11p}^2}{\lambda_{1}}\frac{(1-\frac{2}{\beta})\lambda_1+(1+\frac{2}{\beta})\lambda_p}{(\lambda_1-\lambda_p)\lambda_1}\nonumber\\
&=&(1+\frac{2}{\beta})\sum_{p>m}\frac{F^{pp} u_{11p}^2}{\lambda_1^2}\frac{\frac{\beta-2}{\beta+2}\lambda_1+\lambda_p}{\lambda_1-\lambda_{p}} \nonumber
\end{eqnarray}
On the one hand, 
$$\frac{\frac{\beta-2}{\beta+2}\lambda_1+\lambda_p}{\lambda_1-\lambda_{p}}\geq 0, \quad \mbox{if}~\lambda_p\geq 0.$$
  When $\lambda_p<0$, then
 $$\frac{\frac{\beta-2}{\beta+2}\lambda_1+\lambda_p}{\lambda_1-\lambda_{p}}=-1+ \frac{1+\frac{\beta-2}{\beta+2}}{1-\frac{\lambda_{p}}{\lambda_1}}\geq 0,$$
 the last inequality comes from Proposition \ref{Nm-11}.
\end{proof}

Using \eqref{TP-1} and Lemma \ref{claim-21}, we have
\begin{eqnarray}\label{TP-121}
0&\geq&- \frac{1}{\lambda_1}\sum_{p \neq q}F^{pp,qq}u_{pp1}u_{qq1}+2 \sum_{p>m} \frac{F^{pp}u_{1pp}^2}{\lambda_1(\lambda_1-\lambda_{p})}
-(1+\frac{2}{\beta})\frac{F^{11}u_{111}^2}{\lambda_{1}^2}+ \frac{B}{2}
F^{ii}\lambda^2_{i}\nonumber\\
&&-C(B+\frac{\beta}{-u}+\lambda_{1}),
\end{eqnarray}
if we choose $\beta>\max \{\frac{2n}{n-2}, 4BC \}$.

From Lemma \ref{pro-55-33}, there exists a constant $C, k$ and small constant $\delta_0>0$ such that 
\begin{eqnarray}\label{TP-12931}
&&- \frac{1}{\lambda_1}\sum_{p \neq q}F^{pp,qq}u_{pp1}u_{qq1}+2 \sum_{p>m} \frac{F^{pp}u_{1pp}^2}{\lambda_1(\lambda_1-\lambda_{p})}-(1+\frac{2}{\beta}) \frac{F^{11} u_{111}^2}{\lambda_1^2}\nonumber\\
&\geq& -K\frac{\left(\sum_p F^{pp} u_{pp1}\right)^2}{F \lambda_1}+(\delta_0-\frac{2}{\beta}) \frac{F^{11} u_{111}^2}{\lambda_1^2}\nonumber\\
&>&- CK(1+\lambda_1)
\end{eqnarray}
by assuming $\lambda_1> C$ and choosing $\beta>\{\frac{2n}{n-2}, \frac{2}{\delta_0}, 4BC \}$.
Plugging \eqref{TP-12931} into \eqref{TP-121}, we have
\begin{eqnarray}\label{TP2-126}
0&\geq&-CK(\lambda_1+1)+\frac{B}{2}F^{ii}\lambda^2_{i}-C(B+\frac{\beta}{-u}+\lambda_{1})\nonumber\\
&\geq& \frac{B(n-1)}{2n} \sigma_1\sigma_{n-1}-CK(B+\frac{\beta}{-u}+\lambda_{1})\nonumber\\
&\geq&\frac{B}{8C}  \lambda_1  -C(B+\frac{\beta}{-u}+\lambda_{1}).\nonumber
\end{eqnarray}
It follows that $(-u)^{\beta}\lambda_1 \leq C$ by choosing $a$ large enough. 
Then the theorem is now proved.

\end{proof}


\section{A rigidity theorem for $(n-1)$-Hessian equations}
In this section, we prove Theorem \ref{main-4}. At first we have the following lemmas.

\begin{lemma}\label{rigidity-01}
Let $u\in C^4(\Omega)\cap C^2(\overline{\Omega})$ be a $(n-1)$-convex solution
to the Dirichlet problem of the following Hessian equation  
\begin{equation}\label{Eq-rigi}
\left\{
\begin{aligned}
&\sigma_{n-1}(\nabla^2u)=1 &&in~
\Omega,\\
&u = 0 &&on~\partial \Omega,
\end{aligned}
\right.
\end{equation}
 with $\lambda(\nabla^2u) \in
\Gamma_{n-1}$. 
 Then
\begin{equation*}
(-u)^\beta|\nabla^2 u|(x) \leq C, \quad \forall~ x \in \Omega,
\end{equation*}
for sufficiently large $\beta>0$. Here $C$ and $\beta$ depends only on $n$ and the domain $\Omega$.
\end{lemma}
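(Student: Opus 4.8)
The plan is to obtain Lemma~\ref{rigidity-01} directly from Theorem~\ref{main-2} applied with $\psi\equiv 1$. In that case $|\psi|_{C^2}=1$ and $\inf_\Omega\psi=1$, so the only issue is that the constants $C,\beta$ produced by Theorem~\ref{main-2} a priori depend also on $\sup_\Omega|u|$ and $\sup_\Omega|\nabla u|$; hence it suffices to bound these two quantities by constants depending only on $n$ and $\Omega$, and then to quote Theorem~\ref{main-2}.

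For the $C^0$ bound, note first that $\lambda(\nabla^2u)\in\Gamma_{n-1}\subset\Gamma_1$ gives $\Delta u=\sigma_1(\nabla^2u)>0$, so $u$ is subharmonic, $u\le 0$ in $\Omega$, and in fact $u<0$ in $\Omega$ by the strong maximum principle (the case $u\equiv 0$ being excluded since $\sigma_{n-1}(0)=0\neq 1$). For the lower bound, fix a ball $B_R(x_0)\supset\overline\Omega$ and put $w(x)=A(|x-x_0|^2-R^2)$ with $A=\frac12 n^{-1/(n-1)}$, so that $\nabla^2w=2A\,I$ is positive definite, $\sigma_{n-1}(\nabla^2w)=n(2A)^{n-1}=1=\sigma_{n-1}(\nabla^2u)$, and $w\le 0=u$ on $\partial\Omega$. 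The linearization of $\sigma_{n-1}$ along the segment joining $\nabla^2w$ to $\nabla^2u$ is elliptic, because that segment lies in the convex cone $\Gamma_{n-1}$ on which $\sigma_{n-1}$ is elliptic; hence the comparison principle gives $w\le u$ in $\Omega$, and $\sup_\Omega(-u)\le AR^2$, a quantity depending only on $n$ and $\Omega$.

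For the $C^1$ bound I would use the two standard ingredients of a global gradient estimate. First, a boundary estimate via a barrier: if $\underline w\in C^2(\overline\Omega)$ is an admissible subsolution with $\underline w=0$ on $\partial\Omega$ and $\sigma_{n-1}(\nabla^2\underline w)\ge 1$, then comparison gives $\underline w\le u\le 0$ in $\Omega$, so at each $p\in\partial\Omega$ the outward normal derivatives satisfy $0\le\partial_\nu u(p)\le\partial_\nu\underline w(p)$, and since the tangential derivatives of $u$ vanish on $\partial\Omega$ this bounds $|\nabla u(p)|$ by $\sup_{\partial\Omega}|\nabla\underline w|$. (For $\Omega=B_R(x_0)$ one may simply take $\underline w=A(|x-x_0|^2-R^2)$ as above, which vanishes on $\partial\Omega$; for a general smooth bounded domain such a barrier is built in the standard way, cf.\ \cite{CNS85,Chou01}.) Second, an interior estimate: applying the maximum principle to an auxiliary function of the form $|\nabla u|^2e^{\phi(u)}$ and using the ellipticity of the equation, its interior maximum is controlled in terms of $n$, $\sup_\Omega|u|$, and the value of the auxiliary function on $\partial\Omega$, which is already estimated. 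These combine to $\sup_\Omega|\nabla u|\le C(n,\Omega)$. With $\sup_\Omega|u|$ and $\sup_\Omega|\nabla u|$ both controlled by $n$ and $\Omega$, Theorem~\ref{main-2} now yields $\beta>0$ and $C$, depending only on $n$ and $\Omega$, such that $(-u)^\beta|\nabla^2u|\le C$ on $\Omega$, which is the assertion of the lemma.

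The step I expect to be delicate is the global gradient estimate, and in particular the construction of the boundary barrier $\underline w$ for a general smooth bounded domain, where $\partial\Omega$ is assumed to carry no convexity; for the eventual application to the rigidity Theorem~\ref{main-4}, however, the relevant domain is a ball, for which the paraboloid barrier above suffices. The $C^0$ bound and the final reduction to Theorem~\ref{main-2} are routine.
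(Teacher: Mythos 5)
Your plan — reduce Lemma~\ref{rigidity-01} to Theorem~\ref{main-2} by bounding $\sup_\Omega|u|$ and $\sup_\Omega|\nabla u|$ in terms of $n$ and $\Omega$ — is not the paper's route and contains a genuine gap in the gradient bound.

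The $C^0$ bound via comparison with a paraboloid is fine. But your proposed boundary gradient estimate requires an admissible barrier $\underline w$ with $\underline w=0$ on $\partial\Omega$ and $\sigma_{n-1}(\nabla^2\underline w)\ge 1$, and such a barrier cannot be built on an arbitrary smooth bounded domain: for the $k$-Hessian Dirichlet problem the construction hinges on $\partial\Omega$ being $(k-1)$-convex (as in \cite{CNS85,Chou01}), while Lemma~\ref{rigidity-01} makes no curvature assumption on $\partial\Omega$. The lower barrier $w=A(|x-x_0|^2-R^2)$ from your $C^0$ step is of no help at $\partial\Omega$ because it is strictly negative there, so it yields no boundary normal-derivative bound. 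Your remark that ``the relevant domain is a ball'' for the rigidity application is also incorrect: in the proof of Theorem~\ref{main-4} the lemma is applied to $\Omega_R=\{y: u(Ry)\le R^2\}$, a sublevel set of a solution that is only $(n-1)$-convex and hence not convex, so $\Omega_R$ need be neither a ball nor convex, and no barrier is available there either. (The interior gradient estimate you invoke as ``standard'' is also far from routine for $(n-1)$-admissible, non-convex solutions, but the boundary issue is already fatal.)

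The paper sidesteps the gradient estimate altogether by using a different test function. Instead of $\ln\lambda_1+\beta\ln(-u)+\tfrac{B}{2}|\nabla u|^2$ from Theorem~\ref{main-2}, it takes
$$P=\ln\lambda_1+\beta\ln(-u)+\tfrac12|x|^2,$$
which replaces the unbounded quantity $|\nabla u|^2$ with the a priori bounded $|x|^2$. At an interior maximum the critical-point relation $\lambda_{1,i}/\lambda_1=-\beta u_i/u - x_i$ lets one absorb $\beta u_i^2/u^2$ into $(1+2/\beta)\lambda_{1,i}^2/\lambda_1^2$ plus a term $\tfrac{2}{\beta}x_i^2 F^{ii}$ that is controlled by $\operatorname{diam}(\Omega)$, so the constants depend only on $n$ and $\Omega$. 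After applying Lemma~\ref{pro-55-33} the surviving positive term is $(1-2C/\beta)\sum_i F^{ii}=(1-2C/\beta)\sigma_{n-2}\ge c\,\lambda_1^{1/(n-2)}$ (using Newton--MacLaurin and $\sigma_{n-1}=1$), which dominates the bounded negative term $\beta(n-1)/u$ once $\lambda_1$ is large. This self-contained argument is valid for any bounded smooth domain, which is exactly what the application to the sublevel sets $\Omega_R$ requires; your reduction would only work on domains with enough boundary convexity, and so does not prove the lemma as stated.
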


\begin{proof}

It is obvious that there exist constants $a$ and $b$ depending only on the diameter of the domain $\Omega$ such that
$$\frac{a}{2} x^2-b\leq u\leq 0.$$
Hence, in the following proof, the constant $\beta, C$ can contains $\sup_{\Omega} |u|$. 

We consider the pertrubed quantity $\widetilde{P}$ defined by
\begin{equation*}
P(x)=\ln \lambda_{1}+  \beta\ln (-u)+\frac{1}{2}|x|^2,
\end{equation*}
where $\lambda_{1}$  is the biggest eigenvalue of the Hessian
matrix $\nabla^2 u$. $\beta$ is a constant which will be determined later. 
Suppose $P$ attains its maximum
value in $\Omega$ at $x_0$. Rotating the coordinates, we diagonal
the matrix $\nabla^2 u=(u_{ij})$. Without loss of generality, we may assume $\lambda_1(x_0)$ has multiplicity $m$, then
\begin{eqnarray*}
u_{ij}=u_{ii}\delta_{ij}, \quad \lambda_i=u_{ii}, \quad \lambda_{1}=\lambda_2=\cdots=\lambda_m\geq
\lambda_{m+1}\geq \cdots \geq \lambda_{n}\quad \mbox{at}~x_0.
\end{eqnarray*}
Then, by Lemma 5 in \cite{BCD-17},  we obtain the inequalities \eqref{90-9873} and \eqref{90-9874} in the viscosity sense.

 Differentiating $P$ at
$x_0$ twice, we have
\begin{equation}\label{1102803}
\frac{\lambda_{1,i}}{\lambda_1}+\frac{\beta u_i}{u}+x_i=0,
\end{equation}
and
\begin{equation}\label{1102804}
\frac{\beta u_{ii}}{u} - \frac{\beta u_i^2}{u^2} +
\frac{\lambda_{1, ii}}{\lambda_1} -\frac{\lambda_{1,i}^2}{\lambda_{1}^2} +1\leq 0.
\end{equation}
Analysis similar to that in the proof of Theorem \ref{main-2} show that
\begin{eqnarray}\label{TP-1121}
0&\geq&- \frac{1}{\lambda_1}\sum_{p \neq q}F^{pp,qq}u_{pp1}u_{qq1}+\left(1-\frac{2C}{\beta}\right)\sum_{i=1}^{n}F^{ii}+\frac{\beta (n-1)}{u}\nonumber\\
&&+2\sum_{p>m} \frac{F^{pp}u_{1pp}^2}{\lambda_1(\lambda_1-\lambda_{p})}-(1+\frac{2}{\beta})\frac{F^{11}u_{111}^2}{\lambda_{1}^2}
\end{eqnarray}
if we choose $\beta>\frac{2(n-1)}{n-2}$.

Note that $u_{pp1}=u_{p1p}=0$ for $1<p\leq m$ since \eqref{90-9873}.  According to Lemma \ref{pro-55-33}, there exists a constant C and small constant $\delta_0>0$ such that 
\begin{eqnarray}\label{TP-1293}
&&- \frac{1}{\lambda_1}\sum_{p \neq q}F^{pp,qq}u_{pp1}u_{qq1}+2 \sum_{p>m} \frac{F^{pp}u_{1pp}^2}{\lambda_1(\lambda_1-\lambda_{p})}-(1+\frac{2}{\beta}) \frac{F^{11} u_{111}^2}{\lambda_1^2}\nonumber\\
&\geq& (\delta_0-\frac{2}{\beta}) \frac{F^{11} u_{111}^2}{\lambda_1^2}\nonumber\\
&>&0
\end{eqnarray}
by assuming $\lambda_1> C$ and choosing $\beta>\{\frac{2n-2}{n-2}, \frac{2}{\delta_0} \}$.
Plugging \eqref{TP-1293} into \eqref{TP-1121}, we have
\begin{eqnarray}\label{TP-1125}
0&\geq&(1-\frac{2 C}{\beta})\sum_{i=1}^{n}F^{ii}+\frac{\beta k}{u}.
\end{eqnarray}
We take $\beta= \max \{\frac{2n-2}{n-2}, \frac{2}{\delta_0}, 4C\}+1$, then 
 $$0\geq \frac{\beta (n-1)}{u} + \sigma_{n-2}\geq\frac{\beta (n-1)}{u}+ C\sigma_1^{\frac{1}{n-2}} \sigma_{n-1}^{\frac{n-3}{n-2}}>\frac{\beta (n-1)}{u} +C \lambda_1^{\frac{1}{n-2}}.$$
 Hence we obtain the Lemma.

\end{proof}


\begin{proof}[\textbf{Proof of Theorem \ref{main-4}}]
The proof here is similar to \cite{TW-08, Li16}. So, only a sketch will
be given below. 

Suppose $u$ is an entire solution of the equation  \eqref{digity-00}. For arbitrary positive constant $R>1$. Define
$$\Omega_R:= \{y\in \mathbb{R}^n\mid u(Ry) \leq R^2\},\quad v(y)=\frac{u(Ry)-R^2}{R^2}.$$
We consider the following Dirichlet problem
\begin{equation}\label{Eq-rigi-1}
\left\{
\begin{aligned}
&\sigma_{n-1}(\nabla^2v)=1 &&in~
\Omega_R,\\
&v = 0 &&on~\partial \Omega_R.
\end{aligned}
\right.
\end{equation}
By Lemma \ref{rigidity-01}, it follows that
\begin{equation*}
(-v)^\beta|\nabla^2 v| \leq C,
\end{equation*}
where $C$ and $\beta$ depend only on $n$ and the domain $\Omega_R$. Now using the quadratic growth condition appears in Theorem \ref{main-4}, we can assert that
$$c|Ry|^2-b\leq u(Ry) \leq R^2.$$
Namely 
$$|y|^2 \leq \frac{1+b}{c}.$$
Hence, $\Omega_R$ is bounded and the constant $C$ and $\beta$ become two absolute constants. Consider the domain
$$\Omega'_R=\{ y\mid u(Ry) \leq \frac{R^2}{2} \} \subset \Omega_R.$$
It is easily seen that 
$$ v \leq -\frac{1}{2}, \quad \Delta v \leq 2^{\beta} C\quad \mbox{in}~\Omega'_{R}.$$
Note that $\nabla^2_y v=\nabla^2_x u$. Thus 
$$\nabla u \leq C\quad \mbox{in}~\Omega'_R=\{x\mid u(x) \leq \frac{R^2}{2}
\},$$
where $C$ is absolute constant. Since $R$ is arbitrary, we have the above inequality in whole $\mathbb{R}^n$. Using Evan-Krylov theory, we have 
$$|D^2 u|_{C^{\alpha}(B_R)} \leq C \frac{|D^2 u|_{C^0(B_R)}}{R^{\alpha}} \leq \frac{C}{R^{\alpha}} \longrightarrow 0\quad \mbox{as}~R\rightarrow +\infty.$$
Hence we obtain Theorem  \ref{main-4}.
\end{proof}



\textbf{Conflict of interest statement:}
On behalf of all authors, the corresponding author states that there is no conflict of interest.

\textbf{Data availability statement:}
No datasets were generated or analysed during the current study.


\begin{thebibliography}{50}
\setlength{\itemsep}{-0pt}
\small
\bibitem{And94} 
B. Andrews, 
Contraction of convex hypersurfaces in Euclidean space, Calc. Var. Partial Differential Equations, 2 (1994), 151-171.


\bibitem{BCD-17}
S. Brendle, K. Choi and P. Daskalopoulos, 
Asymptotic behavior of flows by powers of the Gaussian curvature, Acta Math. 219 (2017),  1-16.

\bibitem{Bao-03}
J. Bao, J. Chen, B. Guan, M. Ji, 
Liouville property and regularity of a Hessian quotient equation, Amer. J. Math., 125 (2003), 301–316.

\bibitem{CNS84} 
L. Caffarelli, L. Nirenberg, J. Spruck, 
Dirichlet problem for nonlinear second order elliptic equations I, Monge-Amp\`ere equations, Comm. Pure Appl. Math., 37 (1984), 369-402.

\bibitem{CNS85} 
L. Caffarelli, L. Nirenberg, J. Spruck, 
The Dirichlet problem for nonlinear second order elliptic equations, III: Functions of the eigenvalues of the Hessian, Acta Math., 155 (1985), 261-301.


\bibitem{chang-10}
S.Y.A. Chang, Y. Yuan, 
A Liouville problem for sigma-2 equation, Discrete Contin. Dyn. Syst., 28 (2010),  659–664.

\bibitem{Chen19}
L. Chen, N. Xiang, 
Rigidity theorems for the entire solutions of $2$-Hessian equation, J. Differential Equations, 267 (2019), no. 9, 5202–5219.

\bibitem{Chen20} 
X.J. Chen, Q. Tu, N. Xiang, 
A class of Hessian quotient equations in Euclidean space, J. Differential Equations, 269  (2020), 11172-11194.

\bibitem{Chen21} 
L. Chen, Q. Tu, N. Xiang, 
Pogorelov type estimates for a class of Hessian quotient equations, J. Differential Equations 282 (2021), 272-284.


\bibitem{Chou01} 
K.S. Chou, X.J. Wang, 
A variational theory of the Hessian equation, Comm. Pure Appl. Math., 54 (2001), 1029-1064.



\bibitem{Chu21}
J.C. Chu, 
A simple proof of curvature estimate for convex solution of $k$-Hessian equation,  Proc. Amer. Math. Soc., 149.08 (2021): 3541-3552.

\bibitem{Chu20}
 J.C. Chu, H.M. Jiao, 
 Curvature estimates for a class of
Hessian type equations, Calc. Var. Partial Differential Equations, 60 (2021), 90.



\bibitem{Gi98} 
D. Gilbarge, N.S. Trudinger, 
Elliptic Partial Differential Equations of the Second Order, second edition, Springer, 1998.

\bibitem{Ger06} 
C. Gerhardt, 
Curvature problems, Series in Geometry and Topology, International Press of Boston Inc., Sommerville, 2006.

\bibitem{Ger03} 
M.J. Gursky, J.A. Viaclovsky, 
Fully nonlinear equations on Riemannian manifolds with negative curvature, Indiana Univ. Math. J., 52 (2003), no. 2, 399-419.

\bibitem{Guan-12}
P.F. Guan, J. Li, Y.Y. Li, 
Hypersurfaces of prescribed curvature measure, Duke Math. J., 161 (2012), 1927–1942.

\bibitem{Guan-09}
P.F. Guan, C.S. Lin, X. Ma, 
The existence of convex body with prescribed curvature measures, Int. Math. Res. Not., (2009), 1947–1975.

\bibitem{GRW15} 
P.F. Guan, C.Y.  Ren, Z.Z. Wang,
 Global $C^2$‐Estimates for Convex Solutions of Curvature Equations, Comm. Pure Appl. Math.,  68 (2015), no. 8, 1287-1325.


\bibitem{Ha09} 
F. Han, X.N. Ma, D.M. Wu, 
A constant rank theorem for Hermitian
$k$-convex solutions of complex Laplace equations, Methods Appl. Anal., 16 (2009), no. 2, 263-289.

\bibitem{Hui99} 
G. Huisken, C. Sinestrari, 
Convexity estimates for mean curvature flow and singularities of mean convex surfaces, Acta Math., 183 (1999), no. 1, 45-70.

\bibitem{Li04} 
S.Y. Li, 
On the Dirichlet problems for symmetric function equations of the eigenvalues of the complex Hessian, Asian J. Math., 8 (2004), no. 1, 87-106.

\bibitem{Li96} 
G. Lieberman, 
Second order parabolic differential equations, World Scientific, 1996.

\bibitem{Li16} 
M. Li, C.Y. Ren, Z.Z. Wang, 
An interior estimate for convex solutions and a rigidity theorem, J. Funct. Anal., 270 (2016), no. 7, 2691-2714.

\bibitem{lu23}
S.Y. Lu, 
Curvature estimates for semi-convex solutions of Hessian equations in hyperbolic space,  Calc. Var. Partial Differential Equations,  257 (2023), https://doi.org/10.1007/s00526-023-02598-0.

\bibitem{lu24}
S.Y. Lu, Y.L. Tsai,
A simple proof of curvature estimates for the n-1 Hessian equation,
arxiv preprint arxiv:2412.00576, 2024.


\bibitem{Po78} A.V. Pogorelov, The Minkowski Multidimensional Problem, John
Wiley, 1978.

\bibitem{RW-19}
C.Y. Ren, Z.Z. Wang,
 On the curvature estimates for Hessian equations, Amer. J. Math., 141 (2019), no. 5, 1281-1315.

\bibitem{RW-20} C.Y. Ren, Z.Z. Wang,
 The global curvature estimate for the $n-2$ Hessian equation, Calc. Var. Partial Differential Equations,  239 (2023), https://doi.org/10.1007/s00526-023-02570-y.

\bibitem{RW-200} 
C.Y. Ren, Z.Z. Wang,
Notes on the curvature estimates for Hessian equations, (2020),  arXiv preprint arXiv:2003.14234.


\bibitem{RY-22}
 R. Shankar,  Y. Yuan,
  Rigidity for general semiconvex entire solutions to the sigma-2 equation, Duke Math. J., 171 (2022), no.15, 3201-3214.

\bibitem{Sheng07} 
W.M. Sheng, Y. Zhang,
 A class of fully nonlinear equations arising from conformal geometry, Math. Z., 255 (2007), no. 1, 17-34.

\bibitem{Sz17} 
G. Sz\'ekelyhidi, V. Tosatti, B. Weinkove,
 Gauduchon metrics with prescribed volume form, Acta Math., 219 (2017), no. 1, 181-211.

\bibitem{Sz18} 
G. Sz\'ekelyhidi,
 Fully non-linear elliptic equations on compact Hermitian manifolds, J. Differential Geom., 109 (2018), no. 2, 337-378.

\bibitem{To17}
 V. Tosatti, B. Weinkove, The Monge-Amp\`ere equation for
$(n-1)$-plurisubharmonic functions on a compact K\"ahler manifold,
J. Amer. Math. Soc., 30 (2017), no. 2, 311-346.

\bibitem{To19} 
V. Tosatti, B. Weinkove, Hermitian metrics, $(n-1, n-1)$ forms and Monge-Amp\`ere equations, J. Reine Angew. Math., 755 (2019), 67-101.

\bibitem{S05}
J. Spruck, Geometric aspects of the theory of fully nonlinear
elliptic equations, Clay Mathematics Proceedings, 2 (2005),
283-309.

\bibitem{Tr95} 
N.S. Trudinger, 
On the Dirichlet problem for Hessian equations, Acta Math., 175 (1995), 151-164.

\bibitem{TW-08} 
N.S. Trudinger, X.J. Wang, 
The Monge-Amp\`ere equation and its geometric applications, in: Hand-book of Geometric Analysis, vol. I, International Press, 2008, pp.467–524.

\bibitem{Wang09} 
X.J. Wang,
 The $k$-Hessian equation, in: Geometric Analysis and PDEs, in: Lecture Notes in Math., (2009), 177-252.

\bibitem{Warren-16}
M. Warren, 
Nonpolynomial entire solutions to $\sigma_k$ equations, Commun. Partial Differ. Equ., 41 (2016), 848–853.

\bibitem{Wu87} 
H. Wu, 
Manifolds of partially positive curvature, Indiana Univ. Math. J., 36 (1987), no. 3, 525-548.

\bibitem{Zhang24}
R.J. Zhang, 
$C^2$  estimates for $k$-Hessian equations and a rigidity theorem, arXiv preprint arXiv:2408.10781v1.

\end{thebibliography}
\end{document}